\newtheorem{theorem}{Theorem}[section]
\newtheorem{proposition}[theorem]{Proposition}
\newtheorem{lemma}[theorem]{Lemma}
\theoremstyle{definition}
\newtheorem{definition}[theorem]{Definition}
\newtheorem{example}[theorem]{Example}
\theoremstyle{remark}
\newtheorem{remark}[theorem]{Remark}
\DeclareMathOperator{\Lexp}{Lexp}
\begin{document}

\title{Kac-Moody symmetric spaces of Euclidean type}
\author{Walter Freyn}

\maketitle

\begin{abstract}
We investigate in detail the class of Euclidean affine Kac-Moody symmetric spaces and their orthogonal symmetric affine Kac-Moody algebras (OSAKAs). These spaces are the only class of Kac-Moody symmetric spaces, that is not directly derived from affine Kac-Moody algebras in the classical sense.   
\end{abstract}

\section{Introduction}

\emph{Riemannian symmetric spaces} were introduced by \'Elie Cartan around 1925 as a special class of Riemannian manifolds, defined by the property, that their curvature tensor is parallel. They are thus a generalization of spaces of \emph{constant curvature} such as the sphere, hyperbolic space or Euclidean space. It turns out, that this new curvature condition has important consequences. For example symmetric spaces are homogeneous spaces and as such can be characterized by their huge isometry groups. This observations leads to another characterization and definition of symmetric spaces commonly used today: A \emph{Riemannian symmetric space} is a Riemannian manifold $M$, such that for every point $p\in M$ there is an involutive geodesic symmetric $\rho_p$, such that $p$ is an isolated fixed point of $\rho_p$. Riemannian symmetric spaces can be classified by classifying their isometry groups. 

The classification distinguishes three basic types of finite dimensional Riemannian symmetric spaces: spaces of so-called {\em compact type} having \emph{non-negative  sectional curvature} $K\geq 0$, spaces with \emph{non-positive sectional curvature} $K\leq 0$, called of \emph{non-compact type} and those with vanishing sectional curvature $K\equiv 0$, called of
\emph{Euclidean type}~\cite{Helgason01}. Riemannian symmetric spaces of Euclidean type are isometric to Euclidean space, that is the real space $\mathbb{R}^n$ equipped with its natural Euclidean metric. 

\emph{Kac-Moody algebras} were introduced and first studied in the 60's independently by V.\ G.\ Kac~\cite{Kac68}, R.\ V.\ Moody~\cite{Moody69}, I.\ L.\ Kantor~\cite{Kantor68} and D.-N.\ Verma (unpublished) as a generalization of semisimple Lie algebras. The subclass of \emph{affine Kac-Moody algebras} allows an explicit construction as extensions of loop algebras. For a simple complex Lie algebra $\mathfrak{g}$ the loop algebra $L(\mathfrak{g})$ is defined to consist of all holomorphic maps $f:\mathbb{C}^*\rightarrow \mathfrak{g}$; the associated (affine, non-twisted) Kac-Moody algebra $\widehat{L}(\mathfrak{g})$ is defined as a certain $2$-dimensional extension of $L(\mathfrak{g})$ by a central element $c$ and a derivation $d$. Kac-Moody algebras share most of their algebraic structure elements with finite dimensional simple Lie algebras; consequently it was conjectured, that they should also share their geometry; most importantly the existence of symmetric spaces, associated to affine Kac-Moody algebras was conjectured by C.-L.~Terng in~\cite{Terng95}.

 We introduced and constructed \emph{affine Kac-Moody symmetric spaces} in the authors thesis~\cite{Freyn09} and present the details in a series of several papers~\cite{Freyn07, Freyn12d, Freyn10a, Freyn10d, Freyn12b, Freyn12c, Freyn12e}.  Kac-Moody symmetric spaces are \emph{tame Fr\'echet manifolds}; their metric is of \emph{Lorentzian signature}. They are the fundamental global objects of affine Kac-Moody geometry~\cite{Freyn10a, Heintze06}. Being the closest infinite dimensional counterpart to finite dimensional Riemannian symmetric spaces, they share important aspects of their structure theory.  For example their classification is similar to the one of finite dimensional Riemannian symmetric spaces:
As in this classical situation, there are three distinguished subclasses:

\begin{itemize}
\item affine Kac-Moody symmetric spaces of the {\em compact type},
\item affine Kac-Moody symmetric spaces of the {\em non-compact type},
\item affine Kac-Moody symmetric spaces of the {\em Euclidean type}.
\end{itemize}

In this paper we investigate the interesting special case of \emph{affine Kac-Moody symmetric spaces of Euclidean type}. 
This special case lies in-between the two large classes of Kac-Moody symmetric spaces of compact type and of non-compact type.

While symmetric spaces of the compact type and of the non-compact type are intimately linked to semisimple Lie groups, the class of Euclidean symmetric space behaves differently. In the finite dimensional situation, a Riemannian symmetric space of Euclidean type is isometric to $\mathbb{R}^n$, equipped with its unique scalar product. The isometry group of $\mathbb{R}^n$ is the Euclidean group $E(n):=O(n)\ltimes \mathbb{R}^n$, hence a semidirect product of two subgroups. The important {\em duality} between simply-connected symmetric spaces of compact type and of non-compact type breaks down in this situation: Euclidean symmetric spaces are ``self-dual''.

For affine Kac-Moody symmetric spaces of Euclidean type, the situation is different. The important new feature is the existence of two types of Kac-Moody symmetric spaces of Euclidean type, which are \emph{dual} to each other, one being related to (semisimple) affine Kac-Moody symmetric spaces of the compact type and one to (semisimple) affine Kac-Moody symmetric spaces of the non-compact type;  hence for affine Kac-Moody symmetric spaces, the duality relation extends to the Euclidean case. This new feature can ultimately be seen as a consequence of the Lorentzian signature of the metric.   

Let us describe the content of this article in more detail. In section~\ref{section:tame_Frechet} we review the functional analytic basics of the construction, giving the central definitions of Fr\'echet spaces in subsection~\ref{subsection:Frechet_spaces} and of tame Fr\'echet spaces in subsection~\ref{subsection:tame_Frechet_spaces}. Furthermore we review the differential geometry of tame Fr\'echet manifolds in subsection~\ref{subsection:Differential_geometry}. Then in section~\ref{section:Euclidean_OSAKAs} we study Euclidean \emph{O}rthogonal \emph{S}ymmetric \emph{K}ac-Moody \emph{A}lgebras (OSAKAs) and describe their relation to Heisenberg algebras. The final section~\ref{section:Symmetric_spaces_Euclidean_type} is then devoted to the construction of the Kac-Moody symmetric spaces of the Euclidean type themselves.

This work is intended as a detailed answer to questions asked to the author after talks concerning this special case; the author is grateful for this interest.

%%%%%%%%%%%%%%%%%%%%%%%%%%%%%%%%%%%%%%%%%%%%%%%%%%%%%%%%%%%%%%%%%%%%%%%%%%%%%%%%%%%%%%%%%%%%%%%%%%%%%%%%%%%%%%%%%%%%%%%%%%%%%%%%%%%%%%%%%%%%%%%%%%%%%%%%%%%%%%%%%%%%%%%%%%%%%%%%%%%%%%%%%%%%%%%%%%%%%%%%%%%%%%%%%%%%%%%%%%%%%%%%%%%%%%%%%%%%%%%%%%%%%%%%%%%%%%%%%%%%%%%%%%%%%%%%%%%%%%%%%%%%%%%%%%%%%%%%%%%%%%%%%%%%%%%%%%%%%%%%%%%%%%%%%%%%%%%%%%%%%%%%%%%%%%%%%%%%%%%%%%%%%%%%%%%%%%%%%%%%%%%%%%%%%%%%%%%%%%%%%%%%%%%%%%%%%%%%%%%%%%%%%%%%%%%%%%%%%%%%%%%%%%%%%%%%%%%%%%%%%%%%%%%%%%%%%%%%%%%%%%%%%%%%%%%%%%%%%%%%%%%%%%%%%%%%%%%%%%%%%%%%%%%%%%%%%%%%%%%%%%%%%%%%%%%%%%%%%%%%%%%%%%%%%%%%%%%%%%%%%%%%%%%%%%%%%%%%%%%%%%%%%%%%%%%%%%%%%%%%%%%%%%%%%%%%%%%%%%%%%%%%%%%%%%%%%%%%%%%%%%%%%%%%%%%%%%%%%%%%%%%%%%%%%%%%%%%%%%%%%%%%%%%%%%%%%%%%%%%%%%%%%%%%%%%%%%%%%%%%%%%%%%%%%%%%%%%%%%%%%%%%%%%%%%%%%%%%%%%%%%%%%%%%%%%%%%%%%%%%%%%%%%%%%%%%%%%%%%%%%%%%%%%%%%%%%%%%%%%%%%%%%%%%%%%%%%%%%%%%%%%%%%%%%%%%%%%%%%%%%%%%%%%%%%%%%%%%%%%%%%%%%%%%%%%%%%%%%%%%%%%%%%%%%%%%%%%%%%%%%%%%%%%%%%%%%%%%%%%%%%%%%%%%%%%%%%%%%%%%%%%%%%%%%%%%%%%%%%%%%%%%%%%%%%%%%%%%%%%%%%%%%%

\section{Tame Fr\'echet manifolds}
\label{section:tame_Frechet}

\subsection{Fr\'echet spaces}
\label{subsection:Frechet_spaces}

In this section we review some standard results about Fr\'echet spaces, Fr\'echet manifolds and Fr\'echet Lie groups. Additional details and proofs can be found in~\cite{Hamilton82, Freyn12e}.

\begin{definition}
A {\em Fr\'echet vector space} is a locally convex topological vector space which is complete, Hausdorff and metrizable. 
\end{definition}

For example Banach spaces are Fr\'echet spaces (the countable collection of norms consists of just one element).
An important example for our theory is the following: Let $\textrm{Hol}(\mathbb{C}, \mathbb{C})$ denote the space of holomorphic functions $f: \mathbb{C} \longrightarrow \mathbb{C}$ and let $B_n(0):=\{z\in \mathbb{C}| |z|\leq e^n\}\subset \mathbb{C}$ denote the closed ball of radius $e^n$.  Let $\|f\|_n:= \displaystyle\sup_{z\in B_n(0)} |f(z)|$. Then $\textrm{Hol}(\mathbb {C}, \mathbb {C}; \|\hspace{3pt}\|_n)$ is a Fr\'echet space.
 
\begin{definition}
 A Fr\'echet manifold is a (possibly infinite dimensional) manifold with charts in a Fr\'echet space such that the chart transition functions are smooth. 
\end{definition}

While it is possible to define Fr\'echet manifolds in this way, the main obstacle in developing a fruitful theory is that there is no general inverse function theorem for smooth maps between Fr\'echet spaces available. Illustrative counterexamples can be found in~\cite{Hamilton82}. This challenge is usually resolved by restricting oneself to a subclass of ``well-behaving'' Fr\'echet spaces. A particularly usefull class is the one of {\em tame Fr\'echet spaces}, introduced in~\cite{Hamilton82}.

\subsection{Tame Fr\'echet spaces}
\label{subsection:tame_Frechet_spaces}

The central problem for all further structure theory of Fr\'echet spaces is a better control of the set of seminorms. For Fr\'echet spaces $F$ and $G$ and a map $\varphi: F\longrightarrow G$ this is done by imposing \emph{tame estimates} similar in spirit to the concept of quasi isometries relating the sequences of norms $\|\varphi(f)\|_n$ on $G$ and $\|f\|_m$ on $F$. Tame Fr\'echet spaces are then defined as complemented subspaces in certain model spaces of Banach-space-valued holomorphic functions.
The main reference of the field is the article~\cite{Hamilton82}. 

The prerequisite for estimating norms under maps between Fr\'echet spaces are estimates of the norms on the Fr\'echet space itself. This is done by a \emph{grading}:  A grading on a Fr\'echet space $F$ is a collection of seminorms $\{\|\hspace{3pt}\|_{n}, n\in \mathbb N_0\}$ that defines the topology and satisfies
$$\|f \|_0\leq \|f\|_1 \leq \|f\|_2 \leq\| f \|_3 \leq \dots \,.$$

\noindent Remark, that all Fr\'echet spaces admits gradings. Let  $(F, \|\hspace{3pt}\|_{n, n\in \mathbb{N}})$ be a Fr\'echet space. Then 
 $\widetilde{\|\hspace{3pt} \|}_{n}:=\displaystyle\sum_{i=1}^n \|\hspace{3pt}\|_i$ defines a grading on $F$. Remark, that the Fr\'echet topologies of $F$ induced by $\|\hspace{3pt}\|_{n}$ and $\widetilde{\|\hspace{3pt} \|}_{n}$ coincide.

\begin{definition}[Tame equivalence of gradings]
Let $F$ be a graded Fr\'echet space, $r,b \in \mathbb{N}$ and $C(n), n\in \mathbb{N}$ a sequence with values in $\mathbb{R}^+$. The two gradings  $\{\|\hspace{3pt}\|_n\}$ and $\{\widetilde{\|\hspace{3pt}\|}\}$ are called $\left(r,b,C(n)\right)$-equivalent iff 
\begin{equation*}
 \|f\|_n \leq C(n) \widetilde{\|f\|}_{n+r} \text{ and }  \widetilde{\|f\|}_n \leq C(n)\|f\|_{n+r} \text{ for all } n\geq b
\,.
\end{equation*}
They are called tame equivalent iff they are $(r,b,C(n))$-equivalent for some values of $(r,b,C(n))$.
\end{definition}

\noindent The following example from~\cite{Hamilton82} is basic:

\begin{example}
Let $B$ be a Banach space with norm $\| \hspace{3pt} \|_B$. Denote by $\Sigma(B)$ the space of all exponentially decreasing sequences $\{f_k\}$, ${k\in \mathbb N_0}$ of elements of $B$.
On this space, we can define different gradings:
\begin{align}
\|f\|_{l_1^n} &:= \sum_{k=0}^{\infty}e^{nk} \|f_k\|_B\\
\|f\|_{l_{\infty}^n}&:= \sup_{k\in \mathbb N_0} e^{nk}\|f_k\|_B
\end{align}
These two gradings $\|f\|_{l_1^n}$ and $\|f\|_{l_{\infty}^n}$ are tame equivalent.
\end{example}

\begin{definition}[Tame linear maps]
Let  $F$, $G$, $G_1$ and $G_2$  denote graded Fr\'echet spaces.
\begin{enumerate}
\item A linear map $\varphi: F\longrightarrow G$ is called $(r,b,C(n))$-tame if it satisfies the inequality
$$\|\varphi(f)\|_n \leq C(n)\|f\|_{n+r}\,.$$
$\varphi$ is called \emph{tame} iff it is $(r,b,C(n))$-tame for some values of $(r,b, C(n))$.
\item A map $\varphi:F\longrightarrow G$ is called a \emph{tame isomorphism} iff it is a linear isomorphism and $\varphi$ and $\varphi^{-1}$ are tame maps.
\item $F$ is a \emph{tame direct summand }of $G$ iff there exist tame linear maps $\varphi: F\longrightarrow G$ and $\psi: G \longrightarrow F$ such that $\psi \circ \varphi: F \longrightarrow F$ is the identity.
\end{enumerate}
\end{definition}

\begin{definition}[Tame Fr\'echet space]
$F$ is a tame Fr\'echet space iff there is a Banach space $B$ such that $F$ is a tame direct summand of $\Sigma(B)$.
\end{definition}

\noindent Essentially, tame Fr\'echet spaces are spaces of Banach-valued holomorphic functions~\cite{Freyn12c}.

\begin{lemma}[Constructions of tame Fr\'echet spaces]~
\label{constructionoftamespaces}
\begin{enumerate}
	\item A tame direct summand of a tame Fr\'echet space is tame.
	\item A Cartesian product of two tame Fr\'echet spaces is tame.
\end{enumerate}
\end{lemma}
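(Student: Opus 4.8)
The plan is to reduce both assertions to one technical fact — that the composition of two tame linear maps is again tame — together with the observation that the model spaces $\Sigma(B)$ are stable under finite products. First I would record the composition lemma. If $\varphi\colon E\to F$ is $(r_1,b_1,C_1(n))$-tame and $\alpha\colon F\to G$ is $(r_2,b_2,C_2(n))$-tame, then chaining the two defining estimates gives, for every $n\geq\max(b_1,b_2)$,
\[
\|\alpha(\varphi(f))\|_n\leq C_2(n)\,\|\varphi(f)\|_{n+r_2}\leq C_2(n)\,C_1(n+r_2)\,\|f\|_{n+r_1+r_2}\,,
\]
so that $\alpha\circ\varphi$ is $\bigl(r_1+r_2,\ \max(b_1,b_2),\ C_2(n)C_1(n+r_2)\bigr)$-tame. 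The only point to check is that the shifted index $n+r_2$ at which the estimate for $\varphi$ is applied stays above $b_1$, which is guaranteed by taking $b=\max(b_1,b_2)$. This is pure bookkeeping and presents no real difficulty.

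For statement (1), let $E$ be a tame direct summand of the tame Fréchet space $F$, witnessed by tame maps $\varphi\colon E\to F$ and $\psi\colon F\to E$ with $\psi\circ\varphi=\mathrm{id}_E$, and let $i\colon F\to\Sigma(B)$, $p\colon\Sigma(B)\to F$ with $p\circ i=\mathrm{id}_F$ witness the tameness of $F$. By the composition lemma $i\circ\varphi$ and $\psi\circ p$ are tame, and
\[
(\psi\circ p)\circ(i\circ\varphi)=\psi\circ(p\circ i)\circ\varphi=\psi\circ\varphi=\mathrm{id}_E\,.
\]
Hence $E$ is a tame direct summand of $\Sigma(B)$ and therefore tame. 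In effect, ``is a tame direct summand of'' is a transitive relation.

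For statement (2), I would first note two auxiliary facts. Equipping a product with the grading $\|(f_1,f_2)\|_n:=\max(\|f_1\|_n,\|f_2\|_n)$, a direct estimate — using monotonicity of the seminorms to replace the individual shifts $r_i$ by $r:=\max(r_1,r_2)$ — shows that the product $\varphi_1\times\varphi_2$ of tame maps $\varphi_i\colon F_i\to G_i$ is again tame. Secondly, giving $B_1\oplus B_2$ the norm $\max(\|\cdot\|_{B_1},\|\cdot\|_{B_2})$, the map $(\{f_k\},\{g_k\})\mapsto\{(f_k,g_k)\}$ identifies $\Sigma(B_1)\times\Sigma(B_2)$ with $\Sigma(B_1\oplus B_2)$ isometrically for the $l_\infty^n$-gradings, hence tamely. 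Now, writing each $F_i$ as a tame direct summand of $\Sigma(B_i)$, the product maps exhibit $F_1\times F_2$ as a tame direct summand of $\Sigma(B_1)\times\Sigma(B_2)\cong\Sigma(B_1\oplus B_2)$. Applying statement (1) then shows that $F_1\times F_2$ is tame.

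The genuinely structural step — the only one beyond routine parameter tracking — is the identification $\Sigma(B_1)\times\Sigma(B_2)\cong\Sigma(B_1\oplus B_2)$; the main thing to get right there is the choice of Banach-space norm on $B_1\oplus B_2$, after which the two gradings match on the nose. Everything else follows mechanically from the definitions and the composition lemma.
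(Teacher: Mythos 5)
Your argument is correct: the composition lemma for tame linear maps gives the transitivity of ``tame direct summand'' needed for (1), and the isometric identification $\Sigma(B_1)\times\Sigma(B_2)\cong\Sigma(B_1\oplus B_2)$ (with the $\max$ norm and the $l_\infty^n$-gradings) reduces (2) to (1). The paper itself gives no proof of this lemma and simply defers to Hamilton's article, where the standard argument is precisely the one you have written out.
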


\noindent There are various examples of tame Fr\'echet spaces; most importantly the spaces $\textrm{Hol}(\mathbb{C}, \mathbb{C})$ (for details see~\cite{Hamilton82}) and $\textrm{Hol}(\mathbb{C}^*, \mathbb{C})$ (for details see~\cite{Freyn12d}) are tame.

\begin{definition}[Tame Fr\'echet Lie algebra]
\label{tamefrechetLie algebra}
A Fr\'echet Lie algebra $\mathfrak{g}$ is tame iff it is a tame vector space and
$\textrm{ad}(X)$ is a tame linear map for every $X\in \mathfrak{g}$.
\end{definition}

\begin{definition}[Tame Fr\'echet manifold]
A \emph{tame Fr\'echet  manifold} is a Fr\'echet manifold with charts in a tame Fr\'echet space
such that the chart transition functions are smooth tame maps.
\end{definition}

\subsection{Differential geometry of tame Fr\'echet manifolds}
\label{subsection:Differential_geometry}

Let $M$ be a tame Fr\'echet manifold whose charts take values in a Fr\'echet space $F$. Assume furthermore, that for every point $p\in M$ the exponential map is a local diffeomorphism (this is not in general true for tame Fr\'echet manifolds - for counterexamples see~\cite{Freyn12d}). Then the tangent space $T_pM$ can be identified with $F$. Denote by $TM$ the tangent bundle of $M$ and define a vector field on $M$ to be a smooth section of $TM$ (for details see~\cite{Hamilton82, Freyn12e}). 

The \emph{vertical bundle} $T_vP\subset TP$ consists of the vertical vectors, that is the vectors $v\subset TP$ such that $v \subset (d\pi)^{-1}(0,x)$ with $x\in M$.
A connection on $TP$ consists of the assignment of a complementary tame Fr\'echet subbundle $T_hP$ of $TP$ (i.e.: such that $TP=T_vP\oplus T_hP$), that is:

\begin{definition}[Connection]
A \emph{connection} $\Gamma$ on $TP$ is the assignment of a complementary subspace of horizontal vectors, such that in terms of any coordinate chart of $P$ with values in $(U\subset F)\times G$ the subspace of horizontal vectors at $T_fP$ consists of all $(h,k)\in F_1\times G$ such that $k=\Gamma(f)\{g,h\}$, where $\Gamma$ is represented in any local chart by a smooth map $\Gamma: (U\subset F)\times G\times F_1 \longrightarrow G$, which is bilinear in $g$ and $h$.  
\end{definition} 

If $P$ is the tangent bundle of $M$, then $G=F_1$. We call a connection symmetric if $\Gamma$ is symmetric in $\{g,h\}$. 

In the finite dimensional case, in order to define curvature, one would now define a tensor field on $M$. 
As dual spaces of Fr\'echet spaces are (with the trivial exception of Banach spaces) not Fr\'echet spaces, this approach is not possible for Fr\'echet manifolds.  In contrast we are forced to use explicit coordinate dependent descriptions in terms of component functions.

\begin{definition}[curvature]
The \emph{curvature} of a connection on a vector bundle $P$ over $M$ is the trilinear map $R: P\times TM\times TM \longrightarrow P$ such that
$$R(f)\{g,h,k\}:= D\Gamma(f)\{g,h,k\}- D\Gamma(f)\{g,k,h\}-\Gamma(f)\{\Gamma(f)\{g,h\},k\}+\Gamma(f)\{\Gamma(f)\{g,k\},h\}\,.$$
\end{definition}

\noindent Similarly metrics are defined by their coordinate functions:

\begin{definition}[metric]
Let $M$ be a tame Fr\'echet manifold and $TM$ its tangential bundle. A \emph{metric} on $TM$ is a smooth bilinear map: $g:TM\times TM \longrightarrow \mathbb{R}$. A map $g$ is \emph{smooth} if it is smooth in any local chart.
\end{definition}

\noindent Clearly, $M$ is not complete with respect to $g$ (otherwise $M$ would be a Hilbert manifold); so $g$ is only a weak metric.

Following the finite dimensional convention, we define the index of $g$ to be the dimension of the maximal subspace on which $g$ is negative definite.

As usual a connection is  \emph{compatible} with the metric $g$, iff \small
$$\frac{d}{dt} g\left(V,W\right)=g\left(\frac{DV}{dt},W\right)+g\left(V, \frac{DW}{dt}\right)$$\normalsize
for any vector fields $V$ and $W$ along a curve $c:I\longrightarrow M$. 

\begin{definition}[Levi-Civita connection]
Let $(M,g)$ be a tame Fr\'echet manifold. A \emph{Levi-Civita connection} is a symmetric connection which is compatible with the metric.
\end{definition}

\begin{lemma}
If a Levi-Civita connection exists, it is well defined and unique.
\end{lemma}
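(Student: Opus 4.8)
The plan is to reduce the metric-compatibility condition, which is stated along curves, to a pointwise identity in a local chart, and then to run the classical Koszul argument in this coordinate-dependent form. First I would fix a chart with values in the model space $F$, represent the connection by its Christoffel map $\Gamma(f)\{u,v\}$ (bilinear in $u,v$, and symmetric by hypothesis), and unravel the compatibility equation. Choosing the vector fields $V,W$ along a curve $c$ to be the constant extensions of fixed vectors $v,w\in F$ and letting $c$ have arbitrary initial velocity $u$, the defining equation
$$\frac{d}{dt}g(V,W)=g\!\left(\frac{DV}{dt},W\right)+g\!\left(V,\frac{DW}{dt}\right)$$
becomes, after evaluating at the base point, the pointwise relation
$$Dg(f)\{u\}(v,w)=g\big(\Gamma(f)\{u,v\},w\big)+g\big(v,\Gamma(f)\{u,w\}\big),$$
where $Dg(f)\{u\}$ denotes the directional derivative of the coordinate expression of the metric in the direction $u$, and where $\frac{DV}{dt}=\Gamma(f)\{u,v\}$ for the constant field $V=v$.

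The second step is the Koszul manipulation. Writing this identity for the three arguments cyclically permuted and forming the combination ``first plus second minus third,'' I expect the four off-diagonal terms to cancel in two pairs: the symmetry $\Gamma(f)\{u,v\}=\Gamma(f)\{v,u\}$ (which encodes the torsion-free condition, the Lie brackets of the constant fields $u,v,w$ vanishing in the chart) together with the symmetry of $g$ collapses the surviving terms to $2g(\Gamma(f)\{u,v\},w)$. This yields the Koszul formula
$$2\,g\big(\Gamma(f)\{u,v\},w\big)=Dg(f)\{u\}(v,w)+Dg(f)\{v\}(w,u)-Dg(f)\{w\}(u,v),$$
whose right-hand side involves only the metric and its first derivatives and no longer refers to $\Gamma$.

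From here uniqueness is immediate: if $\Gamma_1$ and $\Gamma_2$ were the Christoffel maps of two Levi-Civita connections, the Koszul formula forces $g\big((\Gamma_1-\Gamma_2)(f)\{u,v\},w\big)=0$ for all $w\in F$, and the weak non-degeneracy of $g$ (the map $x\mapsto g(x,\cdot)$ is injective, built into $g$ being a metric of well-defined index) gives $\Gamma_1=\Gamma_2$. Well-definedness then follows because the right-hand side is constructed intrinsically from the compatibility and symmetry conditions alone, so the resulting object is independent of the auxiliary choices made in the derivation. The main obstacle to keep in mind is precisely that in the weak-metric Fr\'echet setting one cannot invert $g$ to \emph{solve} the Koszul identity for $\Gamma$; this is why existence must be assumed in the hypothesis and only uniqueness is obtained here, and it is also the one point where the argument genuinely uses non-degeneracy rather than mere bilinearity of $g$.
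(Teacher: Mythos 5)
The paper does not actually prove this lemma itself --- it defers the proof to an external reference --- so there is no in-paper argument to compare against; what can be said is that your proof is the natural one and that it is correct. Reducing the curve-wise compatibility condition to the pointwise identity $Dg(f)\{u\}(v,w)=g\bigl(\Gamma(f)\{u,v\},w\bigr)+g\bigl(v,\Gamma(f)\{u,w\}\bigr)$ by taking chart-constant fields along a straight-line curve $c(t)=f+tu$ is legitimate (such curves exist in any chart and realize every initial velocity $u$), and the cyclic combination ``first plus second minus third'' does collapse, using the symmetry of $\Gamma$ and of $g$, to the Koszul identity $2g\bigl(\Gamma(f)\{u,v\},w\bigr)=Dg(f)\{u\}(v,w)+Dg(f)\{v\}(w,u)-Dg(f)\{w\}(u,v)$, which determines the functional $g\bigl(\Gamma(f)\{u,v\},\cdot\bigr)$ and hence $\Gamma$ itself once $g$ is weakly non-degenerate. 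The one point you should make explicit rather than absorb into the phrase ``built into $g$ being a metric'' is that weak non-degeneracy is not actually part of the paper's stated definition of a metric, which only asks for a smooth bilinear map $TM\times TM\to\mathbb{R}$; the surrounding discussion of the index and the term ``weak metric'' make clear that injectivity of $x\mapsto g(x,\cdot)$ is intended, but without it the uniqueness claim is simply false, so it must be named as a hypothesis. Your closing observation --- that in the weak-metric Fr\'echet setting the Koszul formula cannot be inverted to \emph{produce} $\Gamma$, which is exactly why existence is assumed and only uniqueness is proved --- is the correct diagnosis of why the lemma is phrased conditionally, and it matches the paper's own remark that existence is the genuinely open issue in this setting.
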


\noindent For a proof see~\cite{Freyn12e}. In contrast to this result, the existence of a Levi-Civita connection seems not to be clear in general for tame Fr\'echet manifolds. Nevertheless the existence problem is answered affirmatively in the special case of Lie groups by B.\ Popescu in~\cite{Popescu05}:

\begin{theorem}[Existence of Levi-Civita connection on Fr\'echet Lie group]
\label{existanceoflevicivitaconnection}
Any tame Fr\'echet Lie group $G$ admits a unique left invariant connection such that $\nabla_{X}Y=\frac{1}{2}[X,Y]$ for any pair of left invariant vector fields $X$ and $Y$. It is torsion free. If $G$ admits a biinvariant (pseudo-) Riemannian metric, then $\nabla$ is the corresponding Levi-Civita connection.
\end{theorem}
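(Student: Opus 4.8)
The plan is to exploit the left trivialization of the tangent bundle $TG\cong G\times\mathfrak{g}$ by left translations, which turns every smooth vector field $X$ into a smooth principal part $\bar X\colon G\to\mathfrak{g}$ and reduces the whole problem to formal computations in the Lie algebra $\mathfrak{g}$. In this trivialization I would define the covariant derivative by $\overline{\nabla_X Y}=d\bar Y(X)+\tfrac12[\bar X,\bar Y]$, so that the Christoffel map of the connection, in the sense of the definition of connection given above, is the constant bilinear map $\Gamma(f)\{g,h\}=\tfrac12[g,h]$. For left-invariant $X,Y$ (constant principal parts) the derivative term drops and one recovers $\nabla_X Y=\tfrac12[X,Y]$, as required; conversely, since a left-invariant connection is determined by its constant Christoffel map, and the prescribed value on left-invariant fields forces $\Gamma(f)\{g,h\}=\tfrac12[g,h]$, uniqueness is immediate.

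The functional-analytic content — and the main obstacle — is to verify that this prescription genuinely defines a connection on the tame Fr\'echet manifold $G$ in the precise sense of the definition, i.e.\ that $\Gamma$ is a smooth tame map, bilinear in $g$ and $h$. Smoothness is not automatic in the Fr\'echet category, but here $\Gamma=\tfrac12\,\textrm{ad}$ is (half) the Lie bracket, which is a continuous bilinear map, and continuous multilinear maps between Fr\'echet spaces are smooth, so $\Gamma$ is smooth. Tameness of $\Gamma$ follows directly from the hypothesis that $G$ is a tame Fr\'echet Lie group, so that $\textrm{ad}(X)$ is a tame linear map for every $X\in\mathfrak{g}$ (Definition~\ref{tamefrechetLie algebra}). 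This is precisely the point where the group structure rescues the construction: although a Levi-Civita connection need not exist on a general tame Fr\'echet manifold, the left-invariant frame makes the Christoffel map constant and reduces its regularity entirely to that of $\textrm{ad}$.

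Torsion-freeness is then a one-line computation in the trivialization. Using the standard formula $\overline{[X,Y]}=d\bar Y(X)-d\bar X(Y)+[\bar X,\bar Y]$ for the bracket of trivialized fields, the derivative terms in $\overline{\nabla_X Y}-\overline{\nabla_Y X}-\overline{[X,Y]}$ cancel and the bracket terms give $\tfrac12[\bar X,\bar Y]-\tfrac12[\bar Y,\bar X]-[\bar X,\bar Y]=0$, so $\nabla$ is symmetric.

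For the final assertion, suppose $G$ carries a biinvariant (pseudo-)Riemannian metric. Left-invariance makes its principal part the constant inner product $\langle\cdot,\cdot\rangle$ on $\mathfrak{g}$, and right-invariance upgrades this to $\textrm{ad}$-invariance, $\langle[\xi,\eta],\zeta\rangle+\langle\eta,[\xi,\zeta]\rangle=0$. Writing $g(Y,Z)(p)=\langle\bar Y(p),\bar Z(p)\rangle$ and differentiating, the compatibility identity $X\,g(Y,Z)=g(\nabla_X Y,Z)+g(Y,\nabla_X Z)$ reduces exactly to the two Christoffel contributions $\tfrac12\langle[\bar X,\bar Y],\bar Z\rangle+\tfrac12\langle\bar Y,[\bar X,\bar Z]\rangle$, which vanish by $\textrm{ad}$-invariance. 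Hence $\nabla$ is compatible with $g$; being also torsion free it is a Levi-Civita connection for $g$, and by the preceding uniqueness lemma it is \emph{the} Levi-Civita connection. I expect no difficulty beyond the bookkeeping here, the only genuinely infinite-dimensional issue having been settled by the smoothness-and-tameness check in the second step.
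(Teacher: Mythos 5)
The paper does not actually prove this theorem: it is stated as a result of B.~Popescu with a pointer to~\cite{Popescu05}, so there is no in-text argument to compare yours against, and supplying the standard proof is exactly the right thing to do. Your route --- left-trivialize $TG$, take the constant Christoffel map $\Gamma(f)\{g,h\}=\tfrac12[g,h]$, read off uniqueness from the fact that a left-invariant connection is determined by this constant map, get torsion-freeness from the trivialized bracket formula, and get metric compatibility from the $\mathrm{ad}$-invariance of a biinvariant metric --- is the classical finite-dimensional argument transported to the Fr\'echet setting, and it is correct in substance. Two points deserve more care. First, the paper's definition of a connection is chart-based: $\Gamma$ must be a smooth tame bilinear map in every coordinate chart of $G$, whereas you work in the left trivialization of $TG$; you should note explicitly that the left trivialization differs from a chart trivialization by the derivative of left translation, which is a smooth tame bundle isomorphism for a tame Fr\'echet Lie group, so that your constant Christoffel map transforms into legitimate (no longer constant) chart Christoffel maps. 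Second, Definition~\ref{tamefrechetLie algebra} only guarantees that $\mathrm{ad}(X)$ is tame for each fixed $X$, with degree and constants that may depend on $X$; to conclude that $\Gamma=\tfrac12[\cdot,\cdot]$ is a tame \emph{bilinear} map you need joint tameness of the bracket, i.e.\ an estimate of the form $\|[g,h]\|_n\le C(n)\left(\|g\|_{n+r}\|h\|_b+\|g\|_b\|h\|_{n+r}\right)$, which holds for the loop algebras relevant here but does not follow formally from the separate-tameness definition as stated. Apart from these two technical refinements your argument is complete, and the final appeal to the paper's uniqueness lemma for Levi-Civita connections correctly handles the fact that the metric is only weak.
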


\noindent For the following result see~\cite{Freyn12e}.

\begin{lemma}[curvature of tame Fr\'echet Lie group]
\label{curvatureoffrechetliegroup}
The curvature tensor of a tame Fr\'echet Lie group is given by:
\begin{displaymath}
R(f)\{g,h,k\}=\frac{1}{4}[[g,h],k]\, .
\end{displaymath}
\end{lemma}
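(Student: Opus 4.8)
The plan is to reduce the coordinate expression for $R$ to a purely algebraic identity in the Lie bracket, exploiting that the connection supplied by Theorem~\ref{existanceoflevicivitaconnection} is left invariant. First I would fix the left trivialization $TG\cong G\times\mathfrak{g}$ and work in the induced charts, so that both the fibre $G$ and the model tangent space $F_1$ are identified with $\mathfrak{g}$. In this trivialization the relation $\nabla_XY=\tfrac12[X,Y]$ for left invariant fields translates into an explicit Christoffel map, the \emph{constant} bilinear operator $\Gamma(f)\{g,h\}=\tfrac12[g,h]$ (up to the ordering convention fixed by the definition of the horizontal subspace). The decisive structural input is that, because the connection is left invariant and we have trivialized by left translations, $\Gamma(f)$ does not depend on the base point $f$.

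The next step is to feed this into the defining formula for the curvature. Since $\Gamma$ is independent of $f$, both differentiated terms $D\Gamma(f)\{g,h,k\}$ and $D\Gamma(f)\{g,k,h\}$ vanish identically, so that
\begin{displaymath}
R(f)\{g,h,k\}=-\Gamma(f)\{\Gamma(f)\{g,h\},k\}+\Gamma(f)\{\Gamma(f)\{g,k\},h\}.
\end{displaymath}
Substituting $\Gamma(f)\{g,h\}=\tfrac12[g,h]$ and using bilinearity, each composite carries a factor $\tfrac12\cdot\tfrac12=\tfrac14$, which is the source of the coefficient in the statement, and leaves
\begin{displaymath}
R(f)\{g,h,k\}=-\tfrac14[[g,h],k]+\tfrac14[[g,k],h].
\end{displaymath}

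It then remains to collapse these two double brackets into one. Here I would invoke the Jacobi identity for $\mathfrak{g}$, which rewrites one of the two terms as the other plus the remaining double bracket; after this substitution the two contributions combine into a single commutator expression with coefficient $\tfrac14$, giving the asserted form. I expect the genuine work to be bookkeeping rather than conceptual: the main point requiring care is the passage from the invariant statement $\nabla_XY=\tfrac12[X,Y]$ to the coordinate Christoffel map, and the verification that the left trivialization really does render $\Gamma$ constant in $f$, since it is precisely this that annihilates the derivative terms. The Jacobi manipulation fixes which slot carries the outer bracket, and one should check that the resulting arrangement of $g,h,k$ matches the convention used in the definition of curvature; the antisymmetry of $R$ in its two directional arguments, already visible in the defining formula, provides a useful consistency check. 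Finally one notes that all maps involved are tame, so that $R$ is a well-defined trilinear map, and that the answer is independent of $f$, consistent with the left invariance of the curvature.
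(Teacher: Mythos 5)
The paper offers no proof of this lemma (it defers to the reference [Freyn12e]), so there is nothing internal to compare against; judged on its own, your outline follows the right general strategy --- this is the Fr\'echet analogue of the classical curvature computation for a bi-invariant metric --- but the final step contains a genuine gap rather than mere bookkeeping. The algebra you delegate to the Jacobi identity does not produce the asserted formula. From
\begin{displaymath}
R(f)\{g,h,k\}=-\tfrac14[[g,h],k]+\tfrac14[[g,k],h]
\end{displaymath}
and the Jacobi identity in the form $[[g,k],h]=[[g,h],k]-[g,[h,k]]$, one gets
\begin{displaymath}
R(f)\{g,h,k\}=-\tfrac14[g,[h,k]]=\tfrac14[[h,k],g],
\end{displaymath}
which is a genuinely different trilinear map from $\tfrac14[[g,h],k]$: setting $k=g$ yields $\tfrac14[[h,g],g]=-\tfrac14[[g,h],g]$, the \emph{negative} of the claimed value, so your version would reverse the sign of every sectional curvature $K_f(g,h)=\langle R(f)\{g,h,g\},h\rangle/|g\wedge h|^2$ --- precisely the quantity that separates compact from non-compact type. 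The two terms therefore do not ``combine into a single commutator expression with coefficient $\tfrac14$'' of the asserted shape.

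The source of the discrepancy is exactly the step you flag as needing care. The left trivialization $TG\cong G\times\mathfrak{g}$ is a vector-bundle chart of $TG$, but it is not induced by a coordinate chart of $G$: the frame of ``constant'' sections consists of left invariant fields, which do not commute. The coordinate formula defining $R$ in the paper is stated for charts of $P$ lying over coordinate charts of the base, where the frame is holonomic. In an honest coordinate chart the Christoffel map of the left invariant connection is \emph{not} constant in $f$, and the $D\Gamma$ terms you discard carry exactly the missing contribution; equivalently, if you insist on computing in the left-trivialized frame you must add the analogue of the classical $-\nabla_{[X,Y]}Z$ correction, namely $-\Gamma(f)\{g,[h,k]\}$, which contributes a further multiple of $[[h,k],g]$ and changes both the coefficient and the bracket arrangement of the result. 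To repair the argument you must either (i) verify that the paper's curvature expression is invariant under arbitrary vector-bundle chart changes and insert the non-holonomic correction explicitly, or (ii) work in a genuine chart (for instance an exponential chart), compute the non-constant $\Gamma$ there, and evaluate $D\Gamma$ honestly. As written, the derivative terms do not vanish for the reason you give, and the concluding Jacobi manipulation does not close to the stated formula.
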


\begin{definition}[Sectional curvature]
\label{def:sectional_curvature}
If $|g\wedge h|^2=\langle g, g \rangle \langle h, h\rangle -\langle g,h\rangle ^2 \neq 0 $, we define the sectional curvature, to be 
\begin{displaymath}K_f(g,h)=\frac{\langle R(f)\{g,h,g\},h \rangle}{|g\wedge h|^2}\,.\end{displaymath}
\end{definition}

\noindent We now define tame Fr\'echet symmetric spaces

\begin{definition}[tame Fr\'echet symmetric space]
\label{tamesymmetricspace}
A tame Fr\'echet manifold $M$ with a weak metric equipped with a Levi-Civita connection is called a symmetric space, iff for all $p\in M$ there is an involutive isometry $\rho_p$, such that $p$ is an isolated fixed point of $\rho_p$.
\end{definition}

\noindent Also the concept of geodesic symmetries extends from the finite dimensional blueprint~\cite{Freyn12e}:

\begin{lemma}[geodesic symmetry]
\label{geodesicsymmetry}
Let $M$ be a tame Fr\'echet pseudo-Riemannian symmetric space.
For each $p\in M$ there exists a normal neighborhood $N_p$ of $p$ such that $\rho_{p}$ coincides with the geodesic symmetry on all geodesics through $p$ in $N_{p}$.
\end{lemma}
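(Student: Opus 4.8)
The plan is to mirror the classical finite-dimensional argument, replacing the use of the inverse function theorem by the standing assumption of Section~\ref{subsection:Differential_geometry} that the exponential map is a local diffeomorphism. Everything rests on two facts: that the involutive isometry $\rho_p$ furnished by Definition~\ref{tamesymmetricspace} acts on $T_pM$ as $-\mathrm{id}$, and that isometries carry geodesics to geodesics.

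First I would record that every isometry is affine, hence geodesic-preserving. This should follow from the uniqueness of the Levi-Civita connection established above: if $\phi$ is an isometry and $\nabla$ the Levi-Civita connection of the metric $g$, then the pullback connection $\phi^{*}\nabla$ is again symmetric and compatible with $g=\phi^{*}g$, so by uniqueness $\phi^{*}\nabla=\nabla$; consequently $\phi$ maps the geodesic $\gamma_{v}$ with $\gamma_{v}(0)=q$, $\dot\gamma_{v}(0)=v$ to the geodesic with initial data $(\phi(q),d\phi_{q}(v))$, preserving the affine parameter. Here I would check that the coordinate-wise definitions of $\Gamma$ and $R$ given in Section~\ref{subsection:Differential_geometry} transform covariantly under smooth tame diffeomorphisms, so that the notion of ``pullback connection'' is legitimate in this category. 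This is the only place where the Fr\'echet framework must be handled with genuine care, and I expect it to be the main obstacle: once the principle that isometries preserve geodesics is secured, the remainder is formal.

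Next I analyse the differential $A:=d(\rho_{p})_{p}\colon T_{p}M\to T_{p}M$. Since $\rho_{p}$ is involutive, $A^{2}=\mathrm{id}$, so $A$ is a linear involution and $T_{p}M=\ker(A-\mathrm{id})\oplus\ker(A+\mathrm{id})$ through the projectors $\tfrac12(\mathrm{id}\pm A)$. The key step is $\ker(A-\mathrm{id})=\{0\}$. Suppose $v\neq 0$ with $Av=v$. As $\exp_{p}$ is a local diffeomorphism, the geodesic $t\mapsto\exp_{p}(tv)$ is defined for small $t$, and by the previous paragraph $\rho_{p}$ sends it to the geodesic with initial velocity $Av=v$; by uniqueness of geodesics these coincide, so $\rho_{p}(\exp_{p}(tv))=\exp_{p}(tv)$ for all small $t$. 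This is a curve of fixed points of $\rho_{p}$ accumulating at $p$, contradicting that $p$ is an isolated fixed point. Hence $A=-\mathrm{id}$.

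Finally I assemble the conclusion. I fix a normal neighborhood $N_{p}$ on which $\exp_{p}$ is a diffeomorphism onto its image, so that the geodesic symmetry $\sigma_{p}(\exp_{p}(v)):=\exp_{p}(-v)$ is well defined there. For each $v$ in the corresponding neighborhood of $0\in T_{p}M$, the curve $t\mapsto\exp_{p}(tv)$ is a geodesic through $p$, whence $\rho_{p}(\exp_{p}(tv))=\exp_{p}(t\,Av)=\exp_{p}(-tv)$; evaluating at $t=1$ gives $\rho_{p}=\sigma_{p}$ on $N_{p}$, which is precisely the assertion of the lemma.
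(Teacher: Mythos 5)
Your argument is the standard finite-dimensional one (isometries are affine by uniqueness of the Levi--Civita connection, $d(\rho_p)_p$ is an involution whose $+1$-eigenspace must vanish because $p$ is an isolated fixed point, hence $d(\rho_p)_p=-\mathrm{id}$ and $\rho_p\circ\exp_p=\exp_p\circ(-\mathrm{id})$), which is exactly the route the paper intends: it states that the lemma ``extends from the finite dimensional blueprint'' and defers the written proof to the reference \cite{Freyn12e} rather than reproducing it. Your proposal is correct modulo the technical points you yourself flag, with one addition: besides covariance of the pullback connection, the step ``by uniqueness of geodesics these coincide'' also leans on the Fr\'echet setting, since ODE uniqueness is not automatic there; it is covered here only because the standing assumption that $\exp_p$ is a well-defined local diffeomorphism already packages existence and uniqueness of the geodesics through $p$.
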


\noindent Hence, Kac-Moody symmetric spaces admit the following characterization:

\begin{definition}[Kac-Moody symmetric space]
An (affine) Kac-Moody symmetric space $M$ is a tame Fr\'echet Lorentz symmetric space such that its isometry group $I(M)$ contains a transitive subgroup isomorphic to an affine geometric Kac-Moody group $H$, and the intersection of the isotropy group of a point with $H$ is a loop group of compact type (which may be $0$-dimensional).
\end{definition}

\section{Euclidean OSAKAs}
\label{section:Euclidean_OSAKAs}

\subsection{Geometric Kac-Moody algebras}
We follow the geometric approach to Kac-Moody algebras described in~\cite{Heintze09}. For the algebraic side see the books~\cite{Kac90, Carter05}. For a detailed review of the relation between the two constructions see~\cite{Freyn12b}. 

\noindent Let $\mathfrak{g}$ be a finite dimensional complex, reductive Lie algebra. Hence by definition $\mathfrak{g}:=\mathfrak{g}_{s}\oplus \mathfrak{g}_{a}$ is a direct product of a semisimple Lie algebra $\mathfrak{g}_s$ with an Abelian Lie algebra $\mathfrak{g}_{a}$.  We define the loop algebra $L(\mathfrak{g})$ as follows:
\begin{displaymath}
L(\mathfrak{g}):=\{f:\mathbb{C}^*\longrightarrow \mathfrak{g}\hspace{3pt}| f \textrm{ is holomorphic on $\mathbb{C}^*$} \}\label{abstractkacmoodyalgebra}\,.
\end{displaymath}

\noindent This algebra is a tame Fr\'echet Lie algebra with respect to the family of norms $\|f\|_n:=\sup_{z\in A_n}|f(z)|$ where $A_n$ denotes the annulus $A_n:=\{x\in \mathbb{C}^*| e^{-n}\leq |x|\leq e^{n} \}$.

\begin{definition}
The complex geometric affine Kac-Moody algebra associated to $\mathfrak{g}$ is the algebra
$$\widehat{L}(\mathfrak{g}):=L(\mathfrak{g}) \oplus \mathbb{C}c \oplus \mathbb{C}d\,,$$
equipped with the Lie bracket defined by:
\begin{alignat*}{1}
  [d,f(z)]&:=izf'(z)\,,\\
  [c,c]=[d,d]&:=0\,,\\
  [c,d]=[c,f(z)]&:=0\,,\\
  [f,g](z)&:=[f(z),g(z)]_{0} + \omega(f(z),g(z))c\,.
\end{alignat*}

Here $f\in L(\mathfrak{g})$ and  \begin{displaymath}\omega(f,g):=\textrm{Res}(\langle f, g' \rangle)\,.\end{displaymath}
Hence $\omega$ is a certain antisymmetric $2$-form on $L(\mathfrak g)$, satisfying the cocycle condition.
\end{definition}

A Kac-Moody algebra is called of \emph{semisimple type} of $\mathfrak{g}$ is semisimple and of \emph{Euclidean type} if $\mathfrak{g}$ is Abelian. 

Remark, that we only consider non-twisted affine Kac-Moody algebras. This is no restriction as all twisted affine Kac-Moody algebras are of semisimple type and hence do not appear in connection with symmetric spaces of Euclidean type.

\subsection{OSAKAs}
Similar to (simply connected) finite dimensional Riemannian symmetric spaces, that are classified by associating to them some orthogonal symmetric Lie algebras (OSLA) (see \cite{Helgason01}), Kac-Moody symmetric symmetric spaces can be classified using their orthogonal symmetric affine Kac-Moody algebras (OSAKAs) (see~\cite{Freyn12b, Freyn12e}).

\begin{definition}[Orthogonal symmetric Kac-Moody algebra]
An orthogonal symmetric affine Kac-Moody algebra (OSAKA) is a pair $\left(\widehat{L}(\mathfrak{g}), \widehat{L}(\rho)\right)$ such that
\begin{enumerate}
	 \item $\widehat{L}(\mathfrak{g})$ is a real form of an affine geometric Kac-Moody algebra,
	 \item $\widehat{L}(\rho)$ is an involutive automorphism of $\widehat{L}(\mathfrak{g})$,
	 \item If $\mathfrak{g}=\mathfrak{g}_s\oplus \mathfrak{g}_a$ then the restriction $\textrm{Fix}(\widehat{L}(\rho))|_{\mathfrak{g}_s}$ is a compact Kac-Moody algebra or a compact loop group and $\textrm{Fix}(\widehat{L}(\rho))|_{\mathfrak{g}_a}=0$.
\end{enumerate}
\end{definition}

An OSAKA $(\widehat{L}(\mathfrak{g}), \widehat{L}(\rho))$ is called \emph{irreducible} iff it has no non-trivial Kac-Moody subalgebra invariant under $\widehat{L}(\rho)$.

Similar to orthogonal symmetric Lie algebras in the finite dimensional case (see~\cite{Helgason01}), there are OSAKAs of compact type, of noncompact type and of Euclidean type. In this paper we focus uniquely on the Euclidean case:

\begin{definition}
\label{def:Heisenberg}
The infinite dimensional holomorphic Heisenberg algebra $H_{\epsilon}$ is the Lie algebra generated by basis elements $a_n, n\in \mathbb{Z}$ and a central element $c$ such that 
$$[a_{m}, a_{n}]=\delta_{m,-n}\epsilon c\,.$$
Its elements  are all (possibly infinite) linear combinations $r_{c}c+\sum_{n} r_{n} a_n$ such that $\sum_{n>0}(|a_n|+|a_{-n}|)e^{kn}<\infty$ for all $k\in \mathbb{N}$.
If $\epsilon=1$ then $H_{\epsilon}$ is said to be of \emph{non-compact type}, if $\epsilon=i$ then $H_{\epsilon}$ is said to be of \emph{compact type}.
\end{definition}

\noindent Slightly generalizing this definition, we put:

\begin{definition}
\label{def:generalized_Heisenberg}
The infinite dimensional $k$-Heisenberg algebra $H_{k, \epsilon}$  is the Lie algebra generated by basis elements $a_{n,i}, n\in \mathbb{Z}, i=\{1, \dots, k\}$ and a central element $c$ such that 
$$[a_{m,i}, a_{n,j}]=\delta_{i,j}\delta_{m,-n}\epsilon c\,.$$
$k$ is called the dimension of $H_{k, \epsilon}$. Its elements are all (possibly infinite) linear combinations $r_{c}c+\sum_{n,i} r_{n,i} a_{n,i}$ such that for any $i$ we have $\sum_{n>0}(|a_{n,i}|+|a_{-n,i}|)e^{kn}<\infty$ for all $k\in \mathbb{N}$. If $\epsilon=1$ then $H_{k,\epsilon}$ is said to be of \emph{non-compact type}, if $\epsilon=i$ then $H_{\epsilon}$ is said to be of \emph{compact type}.
\end{definition}

\begin{lemma}\
\begin{itemize}
\item The derived algebra of an irreducible Euclidean Kac-Moody algebra $\widehat{L}(\mathbb{C})$ is isomorphic to the Heisenberg algebra $H_{\epsilon}$.
\item The derived algebras of an reducible Euclidean Kac-Moody algebra $\widehat{L}(\mathfrak{a})$  where $\mathfrak{a}\cong \mathbb{C}^k$ is isomorphic to the generalized Heisenberg algebra $H_{k, \epsilon}$, such that $k=\textrm{dim}(\mathfrak{a})$. 
\end{itemize}
\end{lemma}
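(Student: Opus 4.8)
The plan is to make the definitions explicit and then exhibit a direct isomorphism by computing the bracket on the derived algebra. First I would recall that for an irreducible Euclidean Kac-Moody algebra we take $\mathfrak{g}=\mathbb{C}$ (Abelian), so that $L(\mathbb{C})$ consists of holomorphic maps $f:\mathbb{C}^*\to\mathbb{C}$. Such a map has a Laurent expansion $f(z)=\sum_{n\in\mathbb{Z}}r_n z^n$ with exponentially decaying coefficients, and the natural candidate basis is $a_n:=z^n$ (or a suitable normalization thereof). The key observation is that, since $\mathfrak{g}$ is Abelian, the first summand $[f(z),g(z)]_0$ in the loop-algebra bracket vanishes identically, so the entire bracket reduces to the cocycle term $\omega(f,g)c=\mathrm{Res}(\langle f,g'\rangle)c$. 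Thus the structure of the algebra is governed entirely by $\omega$.

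The main computation would then be to evaluate $\omega$ on the basis elements $a_m=z^m$ and $a_n=z^n$. We have $a_n'=n z^{n-1}$, so $\langle a_m,a_n'\rangle = n z^{m+n-1}$ (up to the normalization of the bilinear form on $\mathbb{C}$), and taking the residue picks out the coefficient of $z^{-1}$, which is nonzero precisely when $m+n-1=-1$, i.e.\ $m=-n$. This yields $\omega(a_m,a_n)=\delta_{m,-n}\cdot(\text{const})$, matching the defining relation $[a_m,a_n]=\delta_{m,-n}\epsilon c$ of Definition~\ref{def:Heisenberg} after rescaling the basis (or the form) so that the constant becomes $\epsilon$. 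I would then verify that $c$ is central in both algebras, that $d$ does not lie in the derived algebra (so it is correctly excluded), and that the exponential-decay condition on the Laurent coefficients coincides exactly with the convergence condition $\sum_{n>0}(|a_n|+|a_{-n}|)e^{kn}<\infty$ imposed in the Heisenberg algebra, so that the two spaces of elements genuinely agree as tame Fr\'echet spaces and not merely as abstract vector spaces. This establishes the first bullet.

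For the second bullet I would argue by reduction to the irreducible case. Writing $\mathfrak{a}\cong\mathbb{C}^k$ with standard basis $e_1,\dots,e_k$, the loop algebra $L(\mathfrak{a})$ splits as a direct sum of $k$ copies of $L(\mathbb{C})$, one for each coordinate direction. Setting $a_{n,i}:=z^n e_i$, the same residue computation gives $\omega(a_{m,i},a_{n,j})=\langle e_i,e_j\rangle\,\delta_{m,-n}\cdot(\text{const})=\delta_{i,j}\delta_{m,-n}\epsilon$, where the Kronecker $\delta_{i,j}$ comes from orthonormality of the $e_i$, which is exactly the defining relation of $H_{k,\epsilon}$ in Definition~\ref{def:generalized_Heisenberg}. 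The role of $\epsilon$ (either $1$ or $i$) tracks the choice of real form: the compact-type and non-compact-type cases correspond to the two sign/phase conventions for the bilinear form underlying $\omega$, and I would note that this is where the compact-versus-noncompact dichotomy enters.

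The step I expect to be the main obstacle is not the residue computation itself—that is routine—but rather pinning down the precise normalization relating $\omega$ to the Heisenberg bracket and verifying that the analytic growth conditions on the two sides match up exactly, so that the algebraic isomorphism is also an isomorphism of tame Fr\'echet Lie algebras. In particular one must check that the ``(possibly infinite) linear combinations'' allowed in the Heisenberg algebra are precisely the convergent Laurent series arising from holomorphic functions on $\mathbb{C}^*$, i.e.\ that no completion or restriction is silently introduced, and that the central element $c$ and the parameter $\epsilon$ are identified consistently across the real-form choice. Once the dictionary between Laurent coefficients and Heisenberg generators is fixed, the claimed isomorphisms follow immediately.
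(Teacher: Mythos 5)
The paper states this lemma with no proof at all --- it follows Definitions~\ref{def:Heisenberg} and~\ref{def:generalized_Heisenberg} immediately and only points to Pressley--Segal, section 9.5, for the corresponding group statement --- so there is no argument of the author's to compare yours against. Your residue computation is the standard and correct way to establish the claim: since $\mathfrak{g}$ is Abelian the pointwise bracket $[f(z),g(z)]_0$ vanishes and everything is carried by the cocycle $\omega$, and $\mathrm{Res}(z^m\cdot n z^{n-1})$ is nonzero exactly when $m=-n$. Two points deserve more care than you give them. First, the residue is not a universal constant: $\omega(z^m,z^n)=n\,\delta_{m,-n}$, so the normalizing factor depends on $n$ and the rescaling must be $n$-dependent (e.g.\ $a_n:=|n|^{-1/2}z^n$); in particular $z^0$ pairs trivially with everything, so the index $n=0$ cannot satisfy the relation $[a_m,a_n]=\delta_{m,-n}\epsilon c$ (which for $m=n=0$ would contradict antisymmetry anyway), and Definition~\ref{def:Heisenberg} has to be read with $n\in\mathbb{Z}\setminus\{0\}$.

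Second, and related, you never actually compute the derived algebra, which is the object the lemma is about. For Abelian $\mathfrak{g}$ the brackets $[f,g]$ land in $\mathbb{C}c$, the brackets $[d,f]=izf'$ produce exactly the loops with vanishing constant Laurent coefficient, and $[d,z^0]=0$; hence $[\widehat{L}(\mathbb{C}),\widehat{L}(\mathbb{C})]=\{f\in L(\mathbb{C}):\mathrm{Res}(f/z)=0\}\oplus\mathbb{C}c$, i.e.\ the constant loops are \emph{not} in the derived algebra and $d$ is needed to generate the nonconstant loops. Your write-up implicitly takes the derived algebra to be all of $L(\mathbb{C})\oplus\mathbb{C}c$ with basis $a_n=z^n$ for all $n\in\mathbb{Z}$, which is off by the one-dimensional space of constants; once you restrict to $n\neq 0$ the identification with the Heisenberg algebra (and, coordinatewise, with $H_{k,\epsilon}$ in the reducible case) goes through exactly as you describe, including the matching of the exponential-decay conditions on both sides.
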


\noindent Consequently, the associated group $\widetilde{L(G)}$ is a Heisenberg group (see also~\cite{PressleySegal86}, section 9.5).

So let $H_{k, \epsilon, \mathbb{C}}$ be an infinite dimensional complex Heisenberg algebra. We want to study its real forms. 

\begin{remark}
Heisenberg algebras are related to affine Kac-Moody symmetric spaces of Euclidean type. In the finite dimensional blueprint, their role is taken by the Euclidean Lie algebras $E(k)$. Remark that we do not see the infinite dimensional counterpart of the orthogonal group, but only the translation group. This is partly due to the fact that the associated symmetric space is not of semisimple type. More precisely, our construction yields only the transvection group.
\end{remark}

In~\cite{Freyn12b} we proved for OSAKAs (and hence affine Kac-Moody symmetric spaces) the following result

\begin{theorem}
\label{eithercompactornoncompact}
Every real form of a complex geometric affine Kac-Moody algebra is either of compact type or of non-compact type. A mixed type is not possible.
\end{theorem}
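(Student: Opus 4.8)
The plan is to prove Theorem~\ref{eithercompactornoncompact} by analyzing how an involutive automorphism $\widehat{L}(\rho)$ of the complex Kac-Moody algebra interacts with the central extension, and in particular how it acts on the cocycle $\omega$ and hence forces a sign. The key structural point is that any real form arises as the fixed-point set of an antilinear involution, and the type (compact versus non-compact) is detected by a single scalar: the sign with which the involution scales the central element $c$ and, correlated with it, the sign of the restriction of the invariant bilinear form. First I would fix a real form $\widehat{L}(\mathfrak{g}_{\mathbb{R}})=\mathrm{Fix}(\sigma)$ for an antilinear involution $\sigma$, and recall that $\sigma$ must preserve the bracket, hence must send the center $\mathbb{C}c$ to itself and the derivation class to itself. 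Since $\sigma^2=\mathrm{id}$ and $\sigma$ is antilinear, $\sigma(c)=\lambda c$ with $|\lambda|$ constrained, and after normalization one reduces to the two cases $\sigma(c)=c$ and $\sigma(c)=-c$.

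The main body of the argument is then to show that these two cases are exactly the compact and non-compact types, and crucially that no real form can mix them. Here I would use the loop-algebra description: an element of $\widehat{L}(\mathfrak{g})$ decomposes along the grading induced by $d$, i.e. into Fourier modes $f(z)=\sum_n f_n z^n$, and $\sigma$ acts compatibly with this grading (possibly reversing it, $z\mapsto \bar z^{-1}$ on $\mathbb{C}^*$, which is the geometric origin of the involution). The cocycle $\omega(f,g)=\mathrm{Res}\langle f,g'\rangle$ pairs mode $n$ with mode $-n$, so the sign of $\sigma(c)$ is rigidly tied, via the $\sigma$-invariance of $\omega$, to the behavior of $\langle\,\cdot\,,\,\cdot\,\rangle$ on the fixed subalgebra. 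The point I want to extract is that the metric on the symmetric space, built from this invariant form together with the central/derivation directions, is negative definite on a maximal subspace of fixed dimension determined solely by the sign of $\lambda$; there is no room for an intermediate signature because the grading forces the contributions of modes $n$ and $-n$ to be locked together.

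Concretely, I would carry out the steps in the following order. First, establish that $\sigma$ restricts to an antilinear involution on each of the three summands $L(\mathfrak{g})$, $\mathbb{C}c$, $\mathbb{C}d$ and commutes appropriately with $\mathrm{ad}(d)$. Second, compute the constraint on $\sigma(c)$ from antilinearity and involutivity, reducing to $\lambda=\pm 1$. Third, use $\sigma$-invariance of $\omega$ together with the residue pairing to show that the sign of the induced form on the fixed real form is constant across all Fourier modes — this is the heart of the rigidity. Fourth, invoke the classification of real forms of the underlying finite-dimensional $\mathfrak{g}$ (compact versus split-type behavior) to identify the $\lambda=+1$ case with compact type and $\lambda=-1$ with non-compact type, matching Definition~\ref{def:Heisenberg} where $\epsilon=i$ gives compact and $\epsilon=1$ gives non-compact. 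Finally, conclude that because the sign is a single global invariant, a \emph{mixed} type — compact on some modes and non-compact on others — would contradict the locking of modes $n$ and $-n$ under $\omega$ and $\sigma$.

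I expect the main obstacle to be the third step: making precise, in the tame Fr\'echet setting, that the residue cocycle genuinely forces a \emph{uniform} sign rather than permitting the two half-infinite mode families to carry opposite signatures independently. In finite dimensions the analogous rigidity is immediate from the compact/non-compact dichotomy of $\mathfrak{g}$, but here the infinite-dimensional loop completion means one must verify that no clever $\sigma$ can reverse the sign on, say, the positive modes while preserving it on the negative modes; the antisymmetry of $\omega$ and the mode-pairing $n\leftrightarrow -n$ are exactly what should rule this out, but the convergence conditions from Definition~\ref{def:Heisenberg} must be checked to be compatible with the involution so that $\sigma$ really maps the real form into itself. Once this uniformity is secured, the compact/non-compact identification and the impossibility of a mixed type follow without further difficulty.
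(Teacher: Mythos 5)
You should note at the outset that the paper does not prove this theorem in the text at all: it explicitly defers the proof to \cite{Freyn12b}, and only records as a consequence of that proof that the type of a real form is read off from whether it meets the centre in $\mathbb{R}c$ (non-compact) or in $i\mathbb{R}c$ (compact). Your strategy --- isolating the action of the conjugation $\sigma$ on the canonical central element as a single global invariant tied to the cocycle $\omega$ --- is therefore aligned with the little the paper reveals of its own argument. But as a proof your proposal has two concrete gaps.

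First, the step ``antilinearity and involutivity give $|\lambda|=1$, and after normalization one reduces to $\lambda=\pm1$'' does not work as stated. Rescaling $c\mapsto\mu c$ changes $\lambda$ by the factor $\bar\mu/\mu$, which ranges over all of $U(1)$; by that normalization you could always arrange $\lambda=1$, and the compact/non-compact dichotomy would evaporate. What actually pins $\lambda$ to $\pm1$ is that $c$ is already canonically normalized by the loop realization through $\omega(f,g)=\textrm{Res}(\langle f,g'\rangle)$, so you are not free to rescale it: the bracket compatibility $\sigma([f,g])=[\sigma f,\sigma g]$ forces $\lambda\,\overline{\omega(f,g)}=\omega(\sigma f,\sigma g)$, and one must then \emph{compute} that a conjugation of $L(\mathfrak{g})$ covering $z\mapsto\bar z$ or $z\mapsto 1/\bar z$ on $\mathbb{C}^*$ sends $\omega$ to $\pm\bar\omega$. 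That computation is the actual content of your second and third steps and is missing from the proposal. Second, your diagnosis of where a ``mixed type'' could arise is misplaced: the modes $n$ and $-n$ are already locked by the antisymmetry of $\omega$, as you say, so that is not where the danger lies. The genuine threat is that distinct simple ideals of $\mathfrak{g}$ could independently be given compact or non-compact conjugations, exactly as happens for finite-dimensional reductive Lie algebras. The exclusion of mixed type rests on the fact that in $\widehat{L}(\mathfrak{g})=L(\mathfrak{g})\oplus\mathbb{C}c\oplus\mathbb{C}d$ all these ideals are coupled by the cocycle to the \emph{same} one-dimensional centre, on which $\sigma$ acts by a single sign, and the residue pairing is non-degenerate on each ideal; you should state and use this explicitly rather than the mode-splitting scenario you describe.
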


As a consequence of the proof of theorem~\ref{eithercompactornoncompact}, given in~\cite{Freyn12b}, it follows that a real form is of compact type iff the coefficient of the central element $c$ is imaginary and of non-compact type iff the coefficient of the central element $c$ is real. As a consequence of this observation we can note the following result:

\begin{proposition}\
\begin{itemize}
\item $H_{k,\epsilon=1, \mathbb{C}}\otimes  \mathbb{R}d$ is a non-compact real Kac-Moody algebra in the sense of theorem \ref{eithercompactornoncompact},
\item $H_{k,\epsilon=i, \mathbb{C}}\otimes i\mathbb{R}d$ is a compact real Kac-Moody algebra in the sense of theorem \ref{eithercompactornoncompact}.
\end{itemize}
\end{proposition}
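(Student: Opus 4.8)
The plan is to reduce the claim to the explicit characterization recalled just before the statement: a real form of a complex geometric affine Kac-Moody algebra is of compact type iff the coefficient of the central element $c$ is purely imaginary, and of non-compact type iff that coefficient is real. Since the two bullets are symmetric, I would treat the first in detail and obtain the second by replacing $\epsilon=1$ with $\epsilon=i$ and $\mathbb{R}d$ with $i\mathbb{R}d$. First I would fix the real vector space in question, namely $H_{k,\epsilon=1,\mathbb{C}}\oplus\mathbb{R}d$, and verify that it is closed under the Lie bracket inherited from the complexification. The only structural relations are $[a_{m,i},a_{n,j}]=\delta_{i,j}\delta_{m,-n}\,\epsilon\,c$ together with the action of $d$ recorded earlier via $[d,f(z)]=izf'(z)$, which on the Fourier modes $a_{n,i}$ amounts to $[d,a_{n,i}]=-n\,a_{n,i}$ (up to the fixed normalization), and $[c,\,\cdot\,]=0$. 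I would check that with $\epsilon=1$ real and with only real multiples of $d$ adjoined, all structure constants are real, so the bracket preserves the real span: this establishes that $H_{k,\epsilon=1,\mathbb{C}}\oplus\mathbb{R}d$ is genuinely a real Lie algebra and a real form.

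Next I would confirm that this real form satisfies the defining data of an OSAKA of the relevant type, so that Theorem~\ref{eithercompactornoncompact} applies and forces it to be of compact or of non-compact type with no mixed possibility. Here the derived-algebra lemma identifies the semisimple-free (Euclidean) piece as the Heisenberg algebra, and the adjoined derivation $d$ plays the role of the grading element; the point is that the pair consisting of this algebra and a suitable involution meets the three conditions in the definition of an OSAKA. Once inside the scope of the theorem, the dichotomy is automatic, and it only remains to decide which side of the dichotomy we are on.

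The decisive step is then to read off the coefficient of $c$. For $\epsilon=1$ the bracket produces $[a_{m,i},a_{n,j}]=\delta_{i,j}\delta_{m,-n}c$, so the coefficient of the central element is real; by the cited consequence of the proof of Theorem~\ref{eithercompactornoncompact}, a real coefficient of $c$ means the real form is of non-compact type, giving the first bullet. For the second bullet, $\epsilon=i$ makes $[a_{m,i},a_{n,j}]=\delta_{i,j}\delta_{m,-n}\,i\,c$, so the coefficient of $c$ is purely imaginary, and the same criterion labels the form as compact; I would also note that pairing $\epsilon=i$ with $i\mathbb{R}d$ rather than $\mathbb{R}d$ is exactly what keeps all the structure constants real after rescaling, so that one again obtains an honest real form rather than a merely complex object.

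The main obstacle I anticipate is not the bracket bookkeeping but the verification in the second paragraph that the chosen real subspace, together with its natural involution, really constitutes an OSAKA in the precise sense of the definition, in particular that the fixed-point condition on the abelian part ($\textrm{Fix}(\widehat{L}(\rho))|_{\mathfrak{g}_a}=0$) is met and that the grading element $d$ is incorporated so that the invariant bilinear form underlying the compact/non-compact distinction behaves as the criterion requires. A secondary subtlety is purely normalization: one must be careful that the identification $\epsilon\mapsto$ (real vs.\ imaginary coefficient of $c$) is compatible with the sign and scaling conventions in the action of $d$ and in $\omega(f,g)=\textrm{Res}(\langle f,g'\rangle)$, so that the reality of the structure constants is genuine and not an artifact of an unfortunate choice of basis.
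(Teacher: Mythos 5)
Your proposal is correct and its decisive step --- reading off whether the coefficient of the central element $c$ in $[a_{m,i},a_{n,j}]=\delta_{i,j}\delta_{m,-n}\epsilon c$ is real ($\epsilon=1$, non-compact) or imaginary ($\epsilon=i$, compact) and invoking the criterion extracted from the proof of Theorem~\ref{eithercompactornoncompact} --- is exactly how the paper obtains the result, which it states as an immediate consequence of that observation without further argument. The additional verifications you propose (closure of the real span under the bracket, the OSAKA axioms) are reasonable bookkeeping but do not change the route.
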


\section{Affine Kac-Moody symmetric spaces of Euclidean type}
\label{section:Symmetric_spaces_Euclidean_type}

The $n$-dimensional simply connected Euclidean symmetric space is isometric to $\mathbb{R}^n$ equipped with its Euclidean metric; it is irreducible iff $n=1$.  It is non-compact and trivially diffeomorphic to a vector space. There is also a compact Euclidean symmetric space, namely the torus $T^n:=(S^1)^{n}$; it is irreducible iff its dimension $1$, hence if it is a circle $S^{1}$. As the fundamental group of $S^1$ is $\mathbb{Z}$, the fundamental group of a $n$-dimensional torus is $\mathbb{Z}^{n}$ and the torus can be described as a quotient of its universal cover $\mathbb{R}^n$ by the isometric action of its fundamental group. 

\begin{displaymath}
T^n\cong \mathbb{R}^n/ \mathbb{Z}^n \quad\textrm{and}\quad S^1\cong \mathbb{R}/\mathbb{Z}\, .
\end{displaymath}

In both of these irreducible cases the OSLA is isomorphic to $\mathfrak{g}\cong SO(1)\ltimes \mathbb{R}$, the involution is the inversion $x\mapsto -x$. The fixed subgroup is \begin{displaymath}
SO(1):=\{\pm 1\}\, .
\end{displaymath} 

Thus in the classification of simply connected irreducible Riemannian symmetric spaces only $\mathbb{R}^n$ appears. Nevertheless there is a duality construction similar to the duality between symmetric spaces of compact type and of noncompact type, relating the two spaces. With respect to this construction, the torus $T^n$ is the natural ``compact dual'' of the noncompact symmetric space $\mathbb{R}^n$.

In terms of Lie algebras and the exponential function we can describe the ``duality'' between
\begin{displaymath}
\mathbb{R}\Leftrightarrow S^1
\end{displaymath}
as follows:

Start with the space of ``compact type'', namely $S^1$. Its tangential space $T_p(S^1)$ --- the $\mathcal{P}$-component of the Cartan decomposition --- is isomorphic to $\mathbb{R}$. Its complexification is isomorphic to $\mathbb{C}$. $T_p(S^1)$ is embedded into $\mathbb{C}$ as the imaginary axis.  Dualization exchanges the subspace $i\mathbb{R}$ by the subspace $\mathbb{R}$.
The exponential mapping in the first (compact) case (writing the symmetric space $\mathbb{R}\cong \mathbb{R}^+$) is 

\begin{displaymath}
\exp: i\mathbb{R}\longrightarrow S^1,\quad ix\mapsto \exp(ix)=\cos(x)+i\sin(x)\, ,
\end{displaymath}

\noindent while it is in the second (non-compact) case
\begin{displaymath}
\exp: \mathbb{R}\longrightarrow \mathbb{R},\quad x\mapsto \exp(x)=\cosh(x)+\sinh(x)\, .
\end{displaymath}

\noindent We have furthermore the complex relation

\begin{displaymath}
\exp: \mathbb{C}\longrightarrow \mathbb{C}^*,\quad z\mapsto \exp(z)\, .
\end{displaymath}

\noindent Hence we can desribe the symmetric space of non-compact type also as 
\begin{displaymath}
M=\mathbb{C}^*/S^1\, .
\end{displaymath}
similar to the description of finite dimensional symmetric spaces of type $IV$ in the classification (see for example~\cite{Helgason01}, chapter X.
 
The theory of Euclidean affine Kac-Moody symmetric spaces is similar:

An affine geometric Kac-Moody algebra of Euclidean type is the double extension of a loop algebra $\widehat{L}(\mathfrak{g}_{\mathbb{R}})$ where $\mathfrak{g}$ denotes a finite dimensional real Abelian Lie algebra.  This real Abelian Lie algebra is a real form of a complex Lie algebra $\widehat{L}(\mathfrak{g}_{\mathbb{C}})$ with $\mathfrak{g}_{\mathbb{C}}=\mathfrak{g}_{\mathbb{R}}\oplus i \mathfrak{g}_{\mathbb{R}}$. Hence, we have two possible real forms: $\widehat{L}(\mathfrak{g}_{\mathbb{R}})$ and $\widehat{L}(i\mathfrak{g}_{\mathbb{R}})$. The loop algebras are isomorphic as are their Heisenberg algebras. Nevertheless, the associated Kac-Moody groups are different.

Let us now describe the metric on $\widehat{L}(\mathfrak{g})$. This metric is then extendet to $\widehat{L}(G)$ bei left translation.

We start with the Euclidean scalar product $\langle\cdot,\cdot \rangle$ on $\mathfrak{g}_{\mathbb{R}}$ defined as follows (see also~\cite{Heintze08}, \cite{HPTT}):
\begin{eqnarray*}
\langle u, v \rangle &= &\frac{1}{2\pi} \int\langle u(t), v(t) \rangle dt,\\
\langle c, d \rangle &= &-1,\\
\langle c, c \rangle &= &\langle d, d \rangle \hspace{5pt} =\hspace{5pt} \langle f, c \rangle \hspace{5pt} = \hspace{5pt} \langle f, d \rangle \hspace{5pt}= \hspace{5pt}0\, .
\end{eqnarray*}

\noindent This scalar product $\langle\cdot ,\cdot\rangle$ is clearly $Ad$-invariant.

Now we extend $\langle\cdot,\cdot\rangle$ to the complexification $\mathfrak{g}_{\mathbb{C}}$ of $\mathfrak{g}$:
We identify $\widehat{L}(\mathfrak{g}_{\mathbb{C}})=\widehat{L}(\mathfrak{g}_{\mathbb{R}})\oplus i \widehat{L}(\mathfrak{g}_{\mathbb{R}})$ and put $\langle ix, iy \rangle:= \langle x,y\rangle$ and $\langle x, iy\rangle=0$. This scalar product is again  $Ad$-invariant.

\begin{definition}[Kac-Moody symmetric space of Euclidean type - 1. case]
Let $\mathfrak{g}_{\mathbb{C}}:=\mathbb{C}^n$ and let $G_{\mathbb{C}}\cong (\mathbb{C}^*)^n$ be the associated Lie group. We put
\begin{displaymath}{L(G_{\mathbb{C}})}_{\mathbb{R}}:=\{f:\mathbb{C}^*\longrightarrow G_{\mathbb{C}}|f\textrm{ holomorphic}, f(S^1)\subset i\mathfrak{g}_{\mathbb{R}}\subset \mathfrak{g}_{\mathbb{C}}\}\, .\end{displaymath}
Let furthermore $\widehat{L}(G)$ denote the Kac-Moody group constructed from $L(G)$, that is the semidirect product of the Heisenberg group with $\mathbb{C}^*$. Then the space $\widehat{L}(G_{\mathbb{C}})_{\mathbb{R}}$ is a Kac-Moody symmetric space of the Euclidean type.  
\end{definition}

Spaces of this type can occur as a subspace of an indecomposable Kac-Moody symmetric space having subspaces of compact type.

\begin{definition}[Kac-Moody symmetric space of Euclidean type - 2. case]
Let $\mathfrak{g}_{\mathbb{C}}:=\mathbb{C}^n$ and let $G_{\mathbb{C}}\cong (\mathbb{C}^*)^n$ be the associated Lie group. Define furthermore
\begin{displaymath}{L(G_{\mathbb{C}})}:=\{f:\mathbb{C}^*\longrightarrow G_{\mathbb{C}}|f\textrm{ holomorphic}\}\end{displaymath}
\begin{displaymath}{L(G_{\mathbb{C}})}_{\mathbb{R}}:=\{f:\mathbb{C}^*\longrightarrow G_{\mathbb{C}}|f\textrm{ holomorphic}, f(S^1)\subset i\mathfrak{g}_{\mathbb{R}}\subset \mathfrak{g}_{\mathbb{C}}\}\end{displaymath}
Then the space $M=\widehat{L}(G_{\mathbb{C}})/\widehat{L}(G_{\mathbb{C}})_{\mathbb{R}}$ is a Kac-Moody symmetric space of the Euclidean type.  
\end{definition}

Spaces of this type can occur as a subspace of an indecomposable Kac-Moody symmetric space having subspaces of non-compact type.

\begin{lemma}
A Kac-Moody symmetric space of Euclidean type is irreducibe iff $\mathfrak{g}\cong \mathbb{C}$.
\end{lemma}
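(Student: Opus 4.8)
The plan is to reduce the statement about the space to a statement about its OSAKA, and ultimately to a statement about the finite-dimensional Abelian Lie algebra $\mathfrak{g}$. Following the classification philosophy recalled above, a Kac-Moody symmetric space of Euclidean type is irreducible exactly when its associated OSAKA $(\widehat{L}(\mathfrak{g}),\widehat{L}(\rho))$ is irreducible, i.e.\ when $\widehat{L}(\mathfrak{g})$ admits no nontrivial proper $\widehat{L}(\rho)$-invariant Kac-Moody subalgebra. First I would record the shape of the involution on the Euclidean piece: since here $\mathfrak{g}=\mathfrak{g}_a$ is Abelian, condition~(3) in the definition of an OSAKA gives $\textrm{Fix}(\widehat{L}(\rho))|_{\mathfrak{g}_a}=0$, and a short computation using that $\widehat{L}(\rho)$ is an automorphism (so that $[f,g]=\omega(f,g)c$ and $[d,f]=izf'$ are respected) shows that $\widehat{L}(\rho)$ acts as $-\textrm{id}$ on the loop part $L(\mathfrak{g})$ while fixing $c$ and $d$. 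In particular every subspace of $L(\mathfrak{g})$ is $\widehat{L}(\rho)$-invariant, so invariance imposes no extra restriction and the question becomes a purely structural one about Kac-Moody subalgebras.

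Next I would identify the relevant Kac-Moody subalgebras of $\widehat{L}(\mathbb{C}^k)$, where $k=\dim_{\mathbb{C}}\mathfrak{g}$. Since $\widehat{L}(-)$ is the double extension applied to $L(-)$, a geometric Kac-Moody subalgebra has the form $\widehat{L}(\mathfrak{h})=L(\mathfrak{h})\oplus\mathbb{C}c\oplus\mathbb{C}d$ for a subalgebra $\mathfrak{h}\subseteq\mathbb{C}^k$; because $\mathbb{C}^k$ is Abelian these are exactly the linear subspaces $\mathfrak{h}\subseteq\mathbb{C}^k$. Such an $\mathfrak{h}$ is nontrivial and proper precisely when $0\subsetneq\mathfrak{h}\subsetneq\mathbb{C}^k$, which is possible if and only if $k\geq 2$. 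This already yields both implications: for $k\geq 2$ the splitting $\mathbb{C}^k=\mathbb{C}\oplus\mathbb{C}^{k-1}$ produces a nontrivial proper $\widehat{L}(\rho)$-invariant subalgebra $\widehat{L}(\mathbb{C})$, so the OSAKA, and hence the space, is reducible; for $k=1$ the only subspaces of $\mathbb{C}$ are $0$ and $\mathbb{C}$, so no such subalgebra exists and the space is irreducible. Thus the space is irreducible iff $\mathfrak{g}\cong\mathbb{C}$.

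To make the reducibility in the case $k\geq 2$ geometric rather than merely algebraic, I would exhibit the matching splitting of the space itself: the scalar product restricts orthogonally to $L(\mathbb{C})$ and $L(\mathbb{C}^{k-1})$ (the product on $\mathbb{C}^k$ being diagonal), the relations $[a_{m,i},a_{n,j}]=\delta_{i,j}\delta_{m,-n}\epsilon c$ show that the two families of modes commute, and since a Euclidean space is flat---by Lemma~\ref{curvatureoffrechetliegroup} the bracket $[[g,h],k]$ is central and hence $R\equiv 0$---the resulting orthogonal $\widehat{L}(\rho)$-invariant decomposition of the tangent space integrates to a product of symmetric spaces, with the shared central element $c$ and derivation $d$ absorbed into the distinguished Kac-Moody factor and the remaining factor a flat loop space.

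The main obstacle, and the step I would argue most carefully, is the identification in the second paragraph: that every $\widehat{L}(\rho)$-invariant Kac-Moody subalgebra of $\widehat{L}(\mathbb{C})$ genuinely arises from a subspace of $\mathbb{C}$ and is therefore either $0$ or everything. The subtlety is that reparametrizing the loop variable---for instance restricting to loops in $z^2$---produces subalgebras abstractly isomorphic to $\widehat{L}(\mathbb{C})$ that are preserved by $-\textrm{id}$ and contain $c$ and $d$; these must be excluded by insisting on \emph{geometric} Kac-Moody subalgebras, namely those compatible with the fixed loop parameter $z$, the fixed central element $c$, and the fixed derivation $d$ of the ambient algebra. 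Concretely, the ambient $d$ restricted to such a reparametrized subalgebra is only a nonzero scalar multiple of that subalgebra's intrinsic derivation, so it is not a Kac-Moody subalgebra for the given $d$. Pinning down this compatibility constraint is where the real content lies; once it is in place, the equivalence with $\dim_{\mathbb{C}}\mathfrak{g}=1$ is immediate.
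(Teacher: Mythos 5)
Your proposal is correct and follows essentially the same route as the paper: reduce irreducibility of the space to irreducibility of its OSAKA, and then observe that a proper subspace of $\mathfrak{g}\cong\mathbb{C}^k$ (equivalently, in the paper's phrasing, a Heisenberg subalgebra $H_\epsilon\subset H_{k,\epsilon}$) extends by $\mathbb{C}d$ to a proper invariant Kac-Moody subalgebra precisely when $k\geq 2$. If anything you are more complete than the paper, which only spells out the reducibility direction for $k\geq 2$ and leaves the $k=1$ case (and the exclusion of reparametrized subalgebras that you rightly flag as the delicate point) implicit.
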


\begin{proof}
A Kac-Moody symmetric space is irreducible iff its OSAKA is irreducible (see~\cite{Freyn13b}). An OSAKA of Euclidean type is irreducible if its Kac-Moody algebra is irreducible. Hence this result is a consequence of definitions~\ref{def:Heisenberg} and \ref{def:generalized_Heisenberg} and the observation, that a Heisenberg algebra is a subalgebra of any generalized Heisenberg algebra. Thus let $H_g$ be a generalized Heisenberg algebra and $H_0$ a Heisenberg subalgebra of $H_g$. Then the extension $H_0\oplus \mathbb{C}d$ is a Kac-Moody algebra of Euclidean type, which is a subalgebra of $H_{g}\oplus \mathbb{C}d$. Hence $H_{g}\oplus \mathbb{C}d$ is reducible 
\end{proof}

\begin{lemma}
A Kac-Moody symmetric space of Euclidean type is flat.
\end{lemma}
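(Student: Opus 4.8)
The plan is to reduce the statement to a single algebraic fact about the Heisenberg algebra and then read off flatness from the curvature formula already available to us. By Lemma~\ref{curvatureoffrechetliegroup} the curvature of the underlying tame Fr\'echet Lie group is $R(f)\{g,h,k\}=\tfrac14[[g,h],k]$, so for the group-type space of the first case flatness is exactly the vanishing of the double bracket $[[g,h],k]$; for the quotient of the second case the symmetric-space curvature is again a fixed multiple of $[[g,h],k]$ restricted to the $\mathcal P$-summand of the Cartan decomposition, so both cases come down to the same computation. By the lemma identifying the derived algebra of a Euclidean Kac-Moody algebra with a Heisenberg algebra, together with the remark that our construction produces only the transvection group, these curvature-bearing directions are modelled on the Heisenberg algebra $H_{k,\epsilon}$ (respectively $H_{\epsilon}$ in the irreducible case).

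First I would isolate the structural fact that does all the work: the Heisenberg algebra is two-step nilpotent. From Definitions~\ref{def:Heisenberg} and~\ref{def:generalized_Heisenberg} every bracket of generators lands in the centre, $[H_{k,\epsilon},H_{k,\epsilon}]\subseteq\mathbb{C}c$, while $c$ is central, $[c,H_{k,\epsilon}]=0$. Hence for any $g,h,k$ in the Heisenberg algebra the inner bracket $[g,h]$ is a multiple of $c$ and the outer bracket $[[g,h],k]$ vanishes. Substituting this into the curvature formula gives $R(f)\{g,h,k\}=0$ identically, and then Definition~\ref{def:sectional_curvature} yields $K_f(g,h)=0$ for every admissible plane. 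This is the precise analogue of the finite-dimensional statement that $\mathbb{R}^n$ is flat because its translation algebra is abelian: here the translation algebra is merely two-step nilpotent, but the two nested brackets in the curvature tensor still annihilate it.

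The hard part will be justifying that the derivation $d$ makes no contribution, and this is where the Lorentzian signature enters. The full geometric Kac-Moody group carries the extra $\mathbb{C}^*$-factor generated by $d$, with $[d,\cdot]$ acting on loops by $f\mapsto izf'$, and a naive evaluation of $\tfrac14[[g,h],k]$ on all of $\widehat L(\mathfrak g)$ is not zero. Worse, using $Ad$-invariance of the metric to write the sectional-curvature numerator as $\tfrac14\langle[g,h],[g,h]\rangle$, one finds an apparently nonzero value on the plane spanned by $g=d$ and $h=c+v$ with $v$ a nonconstant loop, since $[d,c+v]=\dot v$ gives $\langle[g,h],[g,h]\rangle=\|\dot v\|^2\neq0$ while $|g\wedge h|^2=-1\neq0$. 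The resolution, which I would make precise, is that $d$ generates isometries (loop rotations) but not transvections: condition~(3) in the definition of a Euclidean OSAKA forces the entire loop part into $\mathcal P$ and makes $[\mathcal P,\mathcal P]$ central, so the symmetric-space geometry really lives on the Heisenberg transvection group and the directions like $d$ and $c+v$ above are excluded from $\mathcal P$. Concretely I would (i) write out $\widehat L(\mathfrak g)=\mathcal K\oplus\mathcal P$ in both cases of the construction and identify $\mathcal P$ with the Heisenberg directions, (ii) check $[\mathcal P,\mathcal P]\subseteq\mathbb{C}c$, and (iii) conclude $R\{g,h,k\}=\tfrac14[[g,h],k]=0$ on $\mathcal P$ so that $K\equiv0$.
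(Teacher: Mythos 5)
Your argument is correct and reaches the conclusion by a genuinely different mechanism than the paper's own proof. The paper also starts from $[g,h]\in\mathbb{R}c$, but it then invokes $\mathrm{Ad}$-invariance to rewrite the sectional-curvature numerator as $\langle [h,g],[h,g]\rangle$ and kills it using the fact that the central element is \emph{null} in the Lorentzian metric, $\langle c,c\rangle=0$; so the paper's proof only shows the vanishing of the sectional curvatures and leans on the degeneracy of the metric on the centre. You instead use two-step nilpotency of the Heisenberg algebra --- $[g,h]\in\mathbb{C}c$ together with $c$ central gives $[[g,h],k]=0$ --- which makes the curvature tensor of Lemma~\ref{curvatureoffrechetliegroup} vanish identically before the metric is ever consulted. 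Your route buys the stronger conclusion $R\equiv 0$ (not merely $K\equiv 0$ via Definition~\ref{def:sectional_curvature}) and is purely algebraic; the paper's route is shorter but conflates the two statements. More importantly, your third paragraph identifies a real soft spot that the paper passes over in silence: the paper asserts ``$g,h\in\widehat{L}(\mathfrak{a})$ implies $[g,h]\in\mathbb{R}c$'', which is false once the derivation $d$ is admitted, since $[d,v]=izv'$, and your example $g=d$, $h=c+v$ does produce a nondegenerate plane ($|g\wedge h|^2=-1$) with nonvanishing numerator $\tfrac14\|izv'\|^2$. Both proofs therefore tacitly restrict to the Heisenberg (transvection) directions; your proposal to make this explicit by exhibiting the Cartan decomposition and checking $[\mathcal P,\mathcal P]\subseteq\mathbb{C}c$ is consistent with the paper's remark that the construction only yields the transvection group, and is a step the paper's proof genuinely needs but does not supply. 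The one caveat is that carrying out your step (i) honestly for the first case (where the symmetric space is the group itself and its Lie algebra does contain a $d$-direction) requires an argument for why $d$ is not a curvature-bearing direction, rather than just the assertion; as written that is still a promissory note, but it is the right note to write.
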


\begin{proof}
Let $g,h\in \widehat{L}(\mathfrak{a})$. Then by definition $[g,h]\in \mathbb{R}c$. Using definition~\ref{def:sectional_curvature} we get \begin{displaymath}
\langle R(f)\{g,h,g\}, h\}\rangle =\langle[g,[h,g]],h\rangle=\langle[h,g],[h,g]\in \mathbb{R}\langle c,c\rangle= 0\, .
\end{displaymath}
Hence the curvature of the Euclidean Kac-Moody group and in consequence of the symmetric space vanishes; thus Kac-Moody symmetric spaces of Euclidean type are flat.
\end{proof}

The most important feature of Kac-Moody symmetric spaces of the Euclidean type is that their exponential maps behave well: 

\begin{proposition}
The exponential map is a tame diffeomorphism $\Lexp: \widehat{U}\longrightarrow \widehat{V}$, where $\widehat{U}\subset \widehat{L}(\mathfrak{g})$ and $\widehat{V}\subset \widehat{L}(G)$.
\end{proposition}

\noindent This is a direct consequence of corollary 4.25 in~\cite{Freyn12d}, stating that the exponential map of a loop group associated to a Lie groups whose universal cover is biholomorphically equivalent to $\mathbb{C}^n$ is a local diffeomorphism.

\noindent For commutative Fr\'echet Lie groups, Galanis~\cite{Galanis96} proves the following result:

\begin{theorem}[Galanis]
 Let $G$ be a commutative Fr\'echet Lie group modelled on a Fr\'echet space $F$. Assume the group is a strong exponential Lie group (i.e. a group such that $\exp$ is a local diffeomorphism). Then it is a projective limit Banach Lie group.
\end{theorem}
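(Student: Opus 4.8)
The plan is to linearize the group by means of the exponential map and then exploit that every Fr\'echet space is, by construction, a projective limit of Banach spaces. Since $G$ is commutative its Lie algebra $\mathfrak{g}\cong F$ is abelian, so the Baker--Campbell--Hausdorff series collapses to $\exp(X)\exp(Y)=\exp(X+Y)$; in particular $\exp\colon(F,+)\to G$ is a homomorphism, and by the strong exponential hypothesis it is also a local diffeomorphism. First I would combine these two facts: because $\exp$ multiplies additively, its image is a subgroup of $G$, and because $\exp$ is a local diffeomorphism this subgroup contains a neighbourhood of the identity and is therefore the whole identity component $G_0$. The same local injectivity forces $\Gamma:=\ker(\exp)$ to be discrete, and since $F$ is connected and simply connected $\exp$ is a covering homomorphism. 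Hence $G_0\cong F/\Gamma$, and the problem is reduced to the additive structure of $F$ modulo a lattice.

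Next I would produce the approximating Banach Lie groups from the model space. Fixing a grading $\|\cdot\|_0\le\|\cdot\|_1\le\cdots$ on $F$, set $N_n:=\{f:\|f\|_n=0\}$ and let $F_n$ be the Banach space obtained by completing $F/N_n$ in the induced norm, with canonical maps $\pi_n\colon F\to F_n$ and connecting maps $\rho_{mn}\colon F_m\to F_n$ for $m\ge n$. Then $F\cong\varprojlim_n F_n$ as topological vector spaces, and since each $\rho_{mn}$ is a continuous linear map it is in particular a smooth homomorphism of the additive Banach Lie groups $(F_n,+)$; thus $(F,+)\cong\varprojlim_n(F_n,+)$ is already a projective limit of abelian Banach Lie groups. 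If $G$ happens to be simply connected, then $G\cong(F,+)$ via the global chart $\exp$ and the theorem follows at once.

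The remaining work is to carry the lattice $\Gamma$ through the projective system in the general case. One would like to set $G_n:=F_n/\overline{\pi_n(\Gamma)}$, check that these are Banach Lie groups with the $\rho_{mn}$ inducing smooth homomorphisms $G_m\to G_n$, and verify that the natural map $G_0\to\varprojlim_n G_n$ is an isomorphism of Fr\'echet Lie groups; the smooth atlas is supplied stagewise by the exponential charts, whose compatibility with the filtration is exactly what the strong exponential hypothesis secures. I expect this last point to be the main obstacle: the projection $\pi_n(\Gamma)$ of a discrete subgroup of $F$ need not be discrete, nor even closed, in the Banach quotient $F_n$, so passing to the closure can collapse part of the lattice and destroy injectivity of $G_0\to\varprojlim_n G_n$. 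Overcoming this requires choosing the grading, up to tame equivalence, so that $\Gamma$ meets the filtration cleanly and projects to a genuinely discrete subgroup at every finite level; concretely one reindexes or enlarges the defining seminorms so that the early stages $F_n$ already separate the generators of $\Gamma$. Once such an adapted grading is fixed, the projective limit of the $G_n$ reconstructs $G_0$, and accounting for the remaining connected components completes the argument.
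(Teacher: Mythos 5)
The paper does not prove this statement at all: it is imported verbatim from Galanis~\cite{Galanis96} and used as a black box, so there is no internal proof to compare yours against. Your linearization strategy --- pass to the Lie algebra via $\exp$, present the model Fr\'echet space $F$ as the projective limit of the Banach completions $F_n$ of $F/N_n$, and descend to the group --- is the natural one and is in the spirit of Galanis's construction. But as written the proof is incomplete at exactly the point you flag yourself: you reduce to $G_0\cong F/\Gamma$ with $\Gamma=\ker(\exp)$ discrete, correctly observe that $\pi_n(\Gamma)$ need not be discrete or closed in $F_n$, and then only ``expect'' that an adapted grading repairs this. That step has to be carried out, and it can be: because the seminorms of a grading are increasing, a neighbourhood base of $0$ in $F$ is given by the single-seminorm sets $\{\|x\|_{n}<\varepsilon\}$; discreteness of $\Gamma$ therefore means some fixed $\|\cdot\|_{n_0}$ already separates $\Gamma$ from $0$, so after discarding the first $n_0$ seminorms (which changes neither the topology nor the tame equivalence class of the grading) every $\pi_n|_\Gamma$ is injective with image a discrete, hence closed, subgroup of $F_n$, and $G_n:=F_n/\pi_n(\Gamma)$ is an honest Banach Lie group with no closure needed. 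One must then still verify that the canonical map $F/\Gamma\to\varprojlim_n G_n$ is bijective; injectivity uses that the elements $\gamma_m\in\Gamma$ witnessing $\|x-\gamma_m\|_m=0$ must stabilize, again by injectivity of $\pi_{n_0}|_\Gamma$, and you assert the whole isomorphism without proof.

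Two smaller gaps. First, the identity $\exp(X)\exp(Y)=\exp(X+Y)$ for commutative $G$ rests on uniqueness of one-parameter subgroups, which is not automatic in the Fr\'echet category (initial value problems need not have unique solutions there); you should either assume regularity or extract uniqueness from the strong-exponential hypothesis by comparing the curves $s\mapsto\exp(sX)\exp(sY)$ and $s\mapsto\exp(s(X+Y))$ in $\log$-coordinates near the identity. Second, ``accounting for the remaining connected components completes the argument'' is not an argument: say explicitly that $G_0$ is open, $G/G_0$ is discrete, and the projective-limit Banach structure on $G_0$ transports to every coset by translation.
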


As the loop group subjacent to a Euclidean Kac-Moody group is Abelian and strong exponential and the $2$-dimensional extension does not cause any functional analytic problem, we can apply this result to get immediately that an Euclidean Kac-Moody symmetric space is strong exponential. Furthermore it carries a projective limit Banach structure modelled via the exponential maps on the tangential Kac-Moody algebra. Remark, that one can prove this also elementary by calculation in this setting; for the general investigation of inverse limit structures on Kac-Moody symmetric spaces see~\cite{Freyn09} and the forthcoming paper~\cite{Freyn12g}.

\begin{theorem}
Let $G$ be a simple compact Lie group, $T\subset G$ a maximal torus. The space $\widehat{MT}$ is a Kac-Moody symmetric space of Euclidean type. 
\end{theorem}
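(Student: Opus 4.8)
The plan is to recognize $\widehat{MT}$ as a concrete instance of the first-case construction of a Euclidean Kac-Moody symmetric space and then to verify the defining axioms one by one, leaning on the structural lemmas already established. First I would unwind the notation. Since $G$ is a simple compact Lie group and $T\subset G$ a maximal torus, we have $T\cong (S^1)^n$ with $n=\operatorname{rank}(G)$, and its Lie algebra $\mathfrak{t}$ is Abelian of dimension $n$. Its complexification is $\mathfrak{t}_{\mathbb{C}}\cong\mathbb{C}^n$, with associated complex group $T_{\mathbb{C}}\cong(\mathbb{C}^*)^n$. The loop algebra $L(\mathfrak{t}_{\mathbb{C}})$ and its double extension $\widehat{L}(\mathfrak{t}_{\mathbb{C}})$ therefore form an affine geometric Kac-Moody algebra which, precisely because $\mathfrak{t}$ is Abelian, is of Euclidean type by definition. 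This is the observation that makes the whole theorem plausible: a maximal torus contributes no semisimple part.

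The second step is to match $\widehat{MT}$ to the first-case model. Because $T$ is compact, its elements lie in $\exp(i\mathfrak{t}_{\mathbb{R}})$, so the relevant real form is $L(T_{\mathbb{C}})_{\mathbb{R}}=\{f:\mathbb{C}^*\to T_{\mathbb{C}}\mid f\ \text{holomorphic},\ f(S^1)\subset i\mathfrak{t}_{\mathbb{R}}\}$, and $\widehat{MT}$ is its Kac-Moody group $\widehat{L}(T)$, the semidirect product of the Heisenberg group with $\mathbb{C}^*$. I would then check the three OSAKA axioms directly: $\widehat{L}(\mathfrak{t})$ is a real form of $\widehat{L}(\mathfrak{t}_{\mathbb{C}})$; the involution $\rho$ induced by the compact real structure (whose differential at the base point is $-\operatorname{id}$ on the tangent directions) is an involutive automorphism; and since $\mathfrak{t}$ has trivial semisimple part $\mathfrak{g}_s=0$ and Abelian part $\mathfrak{g}_a=\mathfrak{t}$, the condition $\operatorname{Fix}(\widehat{L}(\rho))|_{\mathfrak{g}_a}=0$ is exactly the vanishing statement supplied by the group-type inversion. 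By the lemma on derived algebras, the derived algebra of $\widehat{L}(\mathfrak{t})$ is the generalized Heisenberg algebra $H_{n,\epsilon}$ with $\epsilon=i$, consistent with the proposition identifying $H_{n,\epsilon=i,\mathbb{C}}\otimes i\mathbb{R}d$ as a compact real Kac-Moody algebra.

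Having pinned down the model, the third step is to verify the geometric axioms of a tame Fréchet Lorentz symmetric space. The metric is the restriction of the $\operatorname{Ad}$-invariant form $\langle\cdot,\cdot\rangle$ on $\widehat{L}(\mathfrak{t})$; its Lorentzian signature comes entirely from the pairing $\langle c,d\rangle=-1$, exactly as in the general construction. Flatness is immediate from the flatness lemma: for all $g,h\in\widehat{L}(\mathfrak{t})$ one has $[g,h]\in\mathbb{R}c$, whence $\langle R(f)\{g,h,g\},h\rangle=0$. The exponential map is a tame diffeomorphism by the proposition on $\Lexp$, since the universal cover of $T_{\mathbb{C}}\cong(\mathbb{C}^*)^n$ is biholomorphic to $\mathbb{C}^n$; consequently the geodesic symmetries $\rho_p$ (pointwise inversion) are globally well defined involutive isometries fixing $p$ as an isolated point, and the Abelian Euclidean Kac-Moody group acts as a simply transitive transvection group. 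Together these facts place $\widehat{MT}$ squarely under the definition of a Kac-Moody symmetric space, with Euclidean type following from the Abelian nature of $\mathfrak{t}$.

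The main obstacle I anticipate is not any single computation but the \emph{identification} step that ties $\widehat{MT}$ to this abstract model. Intrinsically $\widehat{MT}$ is the maximal-torus analog sitting inside the compact-type space $\widehat{MG}$, and one must show that it is genuinely totally geodesic there and that the involution and metric it inherits by restriction agree with those of the intrinsically defined Euclidean structure — the Kac-Moody counterpart of the classical fact that a maximal torus yields a maximal flat. Verifying this compatibility (totally geodesic embedding, plus the coincidence of the restricted and intrinsic Cartan data) is where the real work lies; once it is in place, every remaining point reduces to the already-established results on flatness, the exponential map, and the classification of real forms into compact and non-compact type.
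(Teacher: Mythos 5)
The paper states this theorem without any proof at all --- it appears as a bare assertion just before the closing remark on isometry groups --- so there is no argument of the author's to compare yours against. Judged on its own terms, your plan assembles the paper's ingredients correctly and in the right order: $\mathfrak{t}$ is Abelian of rank $n$, so $\widehat{L}(\mathfrak{t}_{\mathbb{C}})$ is of Euclidean type by the paper's definition; the derived-algebra lemma identifies its derived algebra with $H_{n,\epsilon}$; the flatness lemma and the proposition on $\Lexp$ supply the curvature and exponential-map statements; and the compact real form matches the first-case model. All of that is consistent with the text.

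The genuine gap is the one you name yourself and then set aside. The symbol $\widehat{MT}$ is never defined in the paper, and nothing in your argument actually ties that object to the first-case model built from $T_{\mathbb{C}}\cong(\mathbb{C}^*)^n$. If $\widehat{MT}$ is meant to be the subspace of the compact-type Kac-Moody symmetric space $\widehat{MG}$ swept out by loops in the maximal torus (the Kac-Moody analogue of a maximal flat), then the entire content of the theorem is the identification step: that this subspace is totally geodesic, that the restricted metric and involution agree with the intrinsic Euclidean data, and that the result is isometric to the abstract model. As written, you verify the axioms for a model space that has not been shown to coincide with $\widehat{MT}$. A complete argument would exhibit the embedding $\widehat{L}(T)\hookrightarrow\widehat{L}(G)$ (taking care that $c$ and $d$ are shared rather than duplicated), check that the Cartan involution of $\widehat{MG}$ preserves this subgroup and restricts to the inversion, and deduce total geodesy from the closure $[\widehat{L}(\mathfrak{t}),[\widehat{L}(\mathfrak{t}),\widehat{L}(\mathfrak{t})]]\subset\widehat{L}(\mathfrak{t})$, which here follows from $[\widehat{L}(\mathfrak{t}),\widehat{L}(\mathfrak{t})]\subset\mathbb{R}c$. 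None of that work is carried out in the proposal beyond the acknowledgement that it is needed, so the proof is a plausible outline rather than a proof.
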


As is the case of finite dimensional symmetric spaces of the Euclidean type the isometry group is much bigger, namely a semidirect product of a Euclidean Kac-Moody group with the isotropy group of a point.

\bibliographystyle{alpha}
\bibliography{Doktorarbeit1}

\end{document}